\newtheorem{theorem}{Theorem}[section]
\newtheorem{lemma}[theorem]{Lemma}
\newtheorem{proposition}[theorem]{Proposition}
\theoremstyle{definition}
\newtheorem{definition}[theorem]{Definition}
\newtheorem{Claim}[theorem]{Claim}
\theoremstyle{remark}
\newtheorem{remark}[theorem]{Remark}
\numberwithin{equation}{section}
\numberwithin{equation}{section}
\newsavebox{\savepar}
\begin{document}	
	\title{\sc On semilinear equations with free boundary conditions on stratified Lie groups}
	\author{\sc Debajyoti Choudhuri$^{a}$ and
		\sc Du\v{s}an D.  Repov\v{s}$^{b,c,d}$\footnote{Corresponding
			author: dusan.repovs@guest.arnes.si}\\
		\small{$^{a}$Department of Mathematics, National Institute of Technology Rourkela, India}\\
		\small{$^b$Faculty of Education, University of Ljubljana, Ljubljana, Slovenia }\\
		\small{$^c$Faculty of Mathematics and Physics, University of Ljubljana, Ljubljana, Slovenia }\\
		\small{$^{d}$ Institute of Mathematics, Physics and Mechanics, Ljubljana, Slovenia.}
	}
	\date{}
	\maketitle
	\begin{abstract}
		\noindent In this paper we establish existence of a solution to a semilinear equation with  free boundary conditions on stratified Lie groups. In the process, a monotonicity condition is proved, which is quintessential in establishing the regularity of the solution.
		\begin{flushleft}
			{\sl Keywords and Phrases}:~ Dirichlet free boundary value problem, Stratified Lie group, sub-Laplacian.\\
			{\sl Math. Subj. Classif. (2020)}:~35J35, 35J60.
		\end{flushleft}
	\end{abstract}
	\section{Introduction}\label{s1}
	In this paper we shall prove the existence of solutions to the following {\it free boundary value} problem:
	\begin{equation}\label{probmain}
		\left\{\begin{aligned}
			-\mathcal{L} u &=\lambda(u-1)_+^2f,~\text{in}~\Omega\setminus H(u),\\
			|\nabla_{\mathbb{G}} u^+|^{2}-|\nabla_{\mathbb{G}} u^-|^2&=2,~~~~~~~~~~~\text{in}~H(u),\\
			u &= 0, ~~~~~~~~~~~\text{on}~\partial\Omega.
		\end{aligned}\right.
	\end{equation}
	Here, $\lambda>0$, $(u-1)_+=\max\{u-1,0\},$ and 
	$H(u)=\partial\{u>1\}.$ Also,  $\nabla_{\mathbb{G}} u^{\pm}$ are the limits of $\nabla_{\mathbb{G}} u$ for the sets $\{u>1\}$ and $\{u\leq 1\}^{\circ},$ respectively. Next, $f\in L^{\infty}(\Omega)$ is a positive bounded function. The domain $\Omega\subset\mathbb{G}$ is bounded, where $\mathbb{G}$ is a stratified Lie group. Finally, $\mathcal{L}$ is the sub-Laplacian which will be defined in Section~\ref{s2}.\\
	The study of elliptic free boundary value problems (FBVPs)  has recently gained momentum, owing to its rich mathematical content besides its physical applications. A naturally occurring free boundary condition can be found in the
	classical problem in fluid dynamics to model a 2-dimensional ideal fluid in terms of its stream function (see {\sc Dipierro et al.} \cite{dipi1}). Interested readers can also check {\sc Batchelor} \cite{Batchelor1,Batchelor2} for the Prandtl-Batchelor free boundary. \\
	From a mathematical point of view, the problem
		\begin{equation}\label{alt_prob}
	\left\{\begin{aligned}
	-\Delta u&=0,~\text{in}~\Omega\setminus G(u),\\
	|\nabla u^+|^2-|\nabla u^-|^2&=2,~~~~~~~~~~~~~~~~~~~~~~~~~~~~~~~~\text{on}~G(u),\\
	u&=0,~~~~~~~~~~~~~~~~~~~~~~~~~~~~~~~~\text{on}~\partial\Omega
	\end{aligned}\right.
	\end{equation}
	has been studied by {\sc Alt-Caffarelli} \cite{alt_caffa}, {\sc Alt et al.} \cite{2}, {\sc Caffarelli et al.} \cite{Caffa_jeri_kenig,12}, and  {\sc Weiss} \cite{43,44}. Later on, {\sc Jerison-Perera} in \cite{J_P_1,30} considered the problem 
	\begin{equation}\label{perera_prob0}
	\begin{aligned}
	-\Delta u&=(u-1)_+^{p-1},~\text{in}~\Omega\setminus G(u),
	\end{aligned}
	\end{equation}
	in particular with the same bounday conditions as in \eqref{alt_prob}, with $G(u)=\partial\{u>1\}$, thus pioneering the study of the existence of a mountain pass point  at which the associated energy functional has a higher value compared to the global minimum (see  \cite[Definition 1]{hofer}). Such a critical point was referred by them as a {\it higher critical point}. A slightly more general problem was considered by {\sc Perera} in \cite{Perera_NoDEA}, as follows
	\begin{equation}\label{perera_prob}
		\left\{\begin{aligned}
			-\Delta u&=\alpha\chi_{\{u>1\}}(x)f(x,(u-1)_+),~\text{in}~\Omega\setminus G(u),\\
			|\nabla u^+|^2-|\nabla u^-|^2&=2,~~~~~~~~~~~~~~~~~~~~~~~~~~~~~~~~\text{on}~G(u),\\
			u&=0,~~~~~~~~~~~~~~~~~~~~~~~~~~~~~~~~\text{on}~\partial\Omega. 
		\end{aligned}\right.
	\end{equation}
	This problem was also studied by {\sc Elcrat-Miller} \cite{elc1} and {\sc Jerison-Perera} \cite{J_P_1} for the case $N=2$. The main result of \cite{Perera_NoDEA} is the establishment of a higher critical point. Some of the important works in the Euclidean setting have been documented in {\sc Dipierro et al.} \cite{dipi1}
	and {\sc Perera} \cite{Perera_NoDEA}, and the references therein.\\
Motivated by the above mentioned works, albeit in the Euclidean setting, we consider \eqref{probmain} in the non-Euclidean setup. One key work in this direction is  {\sc Ferrari-Valdinoci}~\cite{valdinoci1}, in which a free boundary value problem was studied on the Heisenberg group, and the authors established some density estimates for local minima. The problem which we shall study in this paper is classical, however its consideration over a stratified Lie group is new since the Heisenberg group is also a particular kind of a stratified Lie group.\\
We now state the main result of this paper pertaining to the existence of  solutions to problem  \eqref{probmain}:
\begin{theorem}\label{main_res2}
There exists $\lambda_*>0$ such that for any $0<\lambda<\lambda_*$, there exists a positive solution $u$ to problem \eqref{probmain} with the following properties:
\begin{enumerate}[label=(\roman*)]
	\item $u$ is a critical point of $I$;
	\item $u$ satisfies the free boundary condition in the sense of viscosity.
\end{enumerate}
\end{theorem}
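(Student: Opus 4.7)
The plan is to realize the solution as a critical point of the energy
$$I(u) = \frac{1}{2}\int_\Omega |\nabla_{\mathbb{G}} u|^2 \, d\xi + |\{u>1\}\cap\Omega| - \frac{\lambda}{3}\int_\Omega f(\xi)(u-1)_+^3 \, d\xi$$
on the Folland-Stein horizontal Sobolev space $S_0^{1,2}(\Omega)$. Interior variations of the smooth terms yield the PDE in $\Omega\setminus H(u)$, and an inner (Hadamard) variation of the measure term produces exactly the jump $|\nabla_{\mathbb{G}} u^+|^2-|\nabla_{\mathbb{G}} u^-|^2=2$, so any sufficiently regular critical point of $I$ is a solution of \eqref{probmain}.

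I would first verify the mountain-pass geometry of $I$ for $\lambda$ small. The sub-Riemannian Folland-Stein embedding $S_0^{1,2}(\Omega)\hookrightarrow L^3(\Omega)$ (subcritical in the homogeneous dimension $Q$) gives $\int_\Omega f(u-1)_+^3 \leq C\|u\|^3$. Since $|\{u>1\}\cap\Omega|\geq 0$, it follows that $I(u)\geq \tfrac{1}{2}\|u\|^2 - C\lambda\|u\|^3$, so $I(u)\geq \alpha_\lambda>0$ on a sphere $\|u\|=\rho$ whenever $\lambda<\lambda_*$ is small. Moreover $I(tv)\to -\infty$ for any nonnegative $v\in C_c^\infty(\Omega)$ with $\max v>1$ and $\int_\Omega f(v-1)_+^3>0$. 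Because $I$ is only lower semicontinuous on account of the indicator term, a non-smooth mountain-pass theorem of Degiovanni-Marzocchi type, as implemented in the free-boundary setting by Jerison-Perera \cite{J_P_1}, produces a critical point $u_\lambda$ at level $c_\lambda\geq\alpha_\lambda$. Replacing $u_\lambda$ by $|u_\lambda|$ and invoking the Bony-Kohn strong maximum principle for $\mathcal{L}$ yields positivity in $\Omega$.

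Away from $H(u_\lambda)$ the equation $-\mathcal{L}u_\lambda=\lambda(u_\lambda-1)_+^2 f$ holds weakly, and subelliptic regularity theory (Bony, Rothschild-Stein) upgrades $u_\lambda$ to local Hölder regularity on the open sets $\{u_\lambda>1\}$ and $\{u_\lambda<1\}^{\circ}$. The decisive tool, flagged in the abstract as the monotonicity condition, is an Alt-Caffarelli-Friedman type monotonicity inequality for $|\nabla_{\mathbb{G}} u_\lambda^\pm|^2$ on intrinsic gauge balls; it provides the optimal Lipschitz bound across $H(u_\lambda)$ and the existence of the one-sided horizontal traces. The viscosity jump relation then follows by testing $u_\lambda$ against radial barriers built from Folland's fundamental solution of $\mathcal{L}$, in the spirit of Ferrari-Valdinoci \cite{valdinoci1}.

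The main obstacle is the monotonicity inequality on a general stratified group. The classical Euclidean argument uses Schwarz symmetrization on Euclidean spherical slices combined with Euclidean dilations; on $\mathbb{G}$ one must instead work with group dilations and Carnot-Carathéodory geometry, handle the characteristic set of $H(u_\lambda)$ where the horizontal normal degenerates, and compensate for the absence of intrinsic spherical rearrangement by exploiting sharp Kohn-Laplace spectral estimates on gauge annuli. Once this monotonicity inequality is in place, conclusions (i) and (ii) follow the blueprint of Perera \cite{Perera_NoDEA}.
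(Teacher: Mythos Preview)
Your approach diverges from the paper's in two essential ways, and both create genuine gaps.

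First, the paper does \emph{not} apply a nonsmooth Degiovanni--Marzocchi mountain pass directly to the lower semicontinuous functional $I$. Neither does \cite{J_P_1}: the Jerison--Perera scheme, which the paper follows, replaces $I$ by the family of $C^1$ functionals
\[
I_\alpha(u)=\frac12\int_\Omega|\nabla_{\mathbb{G}}u|^2\,dx+\int_\Omega G\!\left(\frac{u-1}{\alpha}\right)dx-\frac{\lambda}{3}\int_\Omega(u-1)_+^3 f\,dx,
\]
verifies the ordinary (PS) condition and mountain-pass geometry for each $I_\alpha$, obtains a genuine critical point $u_\alpha$ at level $c_\alpha\in[\tfrac{r_0^2}{4},\mathcal{C}]$, and only then passes to the limit $\alpha\to 0$. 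The solution $u$ is the limit of the $u_\alpha$; its existence is not produced by any nonsmooth critical point theorem, and the viscosity free boundary condition emerges from a Rellich--Poho\v{z}aev type integration by parts on the level sets $\{u_\alpha=1\pm\epsilon^\pm\}$ (equations \eqref{weak_conv_10}--\eqref{weak_conv_14}) rather than from radial barriers. Your proposed route would need a precise notion of ``critical point'' for $I$ and a separate argument that such a point satisfies the free boundary condition; neither step is supplied.

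Second, the ``monotonicity'' in the paper is \emph{not} an Alt--Caffarelli--Friedman formula for the one-phase densities of the limit $u$. It is the Caffarelli--Jerison--Kenig gradient bound (Lemma~\ref{conv_res1}): if a Lipschitz function satisfies
\[
\pm\mathcal{L}u\leq\frac{\lambda}{\alpha}\chi_{\{|u-1|<\alpha\}}\mathcal{F}(|\nabla_{\mathbb{G}}u|)+A
\]
distributionally, then $|\nabla_{\mathbb{G}}u|$ is bounded on $B_{1/2}$ by a constant \emph{independent of~$\alpha$}. This is applied to the approximants $u_\alpha$, and the uniform Lipschitz bound is precisely what allows the Ascoli--Arzel\`a compactness in Lemma~\ref{convergence1}. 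The ACF monotonicity formula you invoke is a different animal; its status on general stratified groups is, as you yourself note, open, and your proof cannot rest on it. By contrast, the CJK-type bound the paper proves uses only a Caccioppoli estimate, the mean value property for $\mathcal{L}$-harmonic functions, and a Calder\'on--Zygmund maximal-function iteration, all of which are available in the sub-Riemannian setting.

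In short: the regularization-and-limit architecture together with the CJK-type uniform Lipschitz estimate is the paper's engine, and your proposal replaces both pieces by tools that are either unavailable (ACF on $\mathbb{G}$) or mis-attributed (nonsmooth mountain pass to \cite{J_P_1}).
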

\begin{remark}\label{nontrivial}Notice that by a nontrivial solution to \eqref{probmain} we mean $u>0$ on $\Omega$ and $u>1$ on a nonempty open subset of $\Omega$ on which  $-\mathcal{L}u=\lambda (u-1)^2f$ holds.\end{remark}
	\noindent
	The organization of the paper is as follows. 
	In Section~\ref{s2} we recall the preliminaries of the stratified Lie group and the space description. In addition, we prepare the necessary tools required to attack problem  \eqref{probmain}. 
	In Section~\ref{s3} we prove a monotonicity lemma (Lemma~\ref{conv_res1}). 
	In Section~\ref{s4} we prove a convergence lemma (Lemma~\ref{convergence1}).
	In Section~\ref{s5} we prove the main result of this paper (Theorem~\ref{main_res2}).
	Finally, in Section~\ref{s6} we prove an auxilliary lemma on the Radon measure (Lemma~\ref{positive_Rad_meas}).
	\section{Preliminaries 
	}\label{s2}
	This section includes the necessary tools to study  problem \eqref{probmain}. For all other background information we refer to the comprehensive handbook \cite{PRR}. We begin by the definition of a homogeneous Lie group.
	\begin{definition}\label{HLG}
		A Lie group $\mathbb{G}$, on $\mathbb{R}^N$ is said to be homogeneous, if for any $\mu>0$ there exists an automorphism $T_{\mu}:\mathbb{G}\to\mathbb{G}$ defined by $$T_{\mu}(x)=(\mu^{r_1}x_1,\mu^{r_2}x_2,\cdots,\mu^{r_N}x_N),
		\quad
		r_i>0,
		\quad
		i=1,2,\cdots,N.$$ The map $T_{\mu}$ is called a dilation on $\mathbb{G}$. Here, $x=(x_1,x_2,\cdots,x_N)$.
	\end{definition}
	\noindent It is worth noting that  $N$ represents the topological dimension of $\mathbb{G}$, whereas $D=r_1+r_2+\cdots+r_N$ represents the homogeneous dimension of the homogeneous Lie group $\mathbb{G}$. The symbol $dx$ will serve as our notation for
	the  Haar measure, which is the standard Lebesgue measure on $\mathbb{R}^N$. The following is the definition of a stratified Lie group.
	\begin{definition}\label{SLG_defn}
		A homogeneous Lie group $\mathbb{G}=(\mathbb{R}^N,*)$ is called a stratified Lie group (or a homogeneous Carnot group) if the following two conditions are satisfied:
		\begin{enumerate}[label=(\roman*)]
			\item The decomposition $\mathbb{R}^N=\mathbb{R}^{N_1}\times\mathbb{R}^{N_2}\times\cdots\times\mathbb{R}^{N_k}$ holds for some natural numbers $N_1,N_2,\cdots,N_k$ such that $N_1+N_2+\cdots+N_k=N$. Furthermore, for each $\mu>0$ there exists a dilation of the form $T_{\mu}(x)=(\mu x^{(1)},\mu^2 x^{(2)},\cdots,\mu^k x^{(k)})$ which is an automorphism of the group $\mathbb{G}$. Here, $x^{(i)}\in\mathbb{R}^{N_i}$ for each $i=1,2,\cdots,k$.
			\item Let $N_1$ be the same as in the above decomposition of $\mathbb{R}^N$
			and let $X_1,X_2,\cdots,X_{N_1}$ be the left invariant vector fields on $\mathbb{G}$ such that $X_i(0)=\frac{\partial}{\partial x_i}|_{0}$ for $i=1,2,\cdots,N_1$. Then the H\"{o}rmander condition  rank(Lie$\{X_1,X_2,\cdots,X_{N_1}\})=N$ holds for every $x\in\mathbb{R}^N$. Roughly speaking, the Lie algebra corresponding to the Lie group $\mathbb{G}$ is spanned by the iterated commutators of $X_1,X_2,\cdots,X_{N_1}$.
		\end{enumerate}
	\end{definition}
	\noindent Here, $k$ is called the {\it step} of the homogeneous Carnot group. In the case of a stratified Lie group, the homogeneous dimension becomes $D=\sum_{i=1}^{k}iN_i$. 
	Throughout the paper, we set $N=N_1$ in Definition \ref{SLG_defn}. We call a curve $\gamma:[0,1]\to\mathbb{R}$   admissible if there exists $c_i:[0,1]\to\mathbb{R}$, for $i=1,2,\cdots,N$ such that 
	$$\gamma'(t)=\sum_{i=1}^{N}c_i(t)X_i(\gamma(t)),~\sum_{i=1}^{N}c_i(t)^2\leq 1.$$
	Here, $\gamma'$ is the derivative with respect to $t$. The functions $c_i$ may not be unique since the vector fields $X_i$ may not be linearly independent. For any $x,y\in\mathbb{G},$ the {\it Carnot-Carath\'{e}odory} distance is defined by 
	$$d_{cc}(x,y)=\inf\{l>0:\text{there exists an admissible}~\gamma:[0,l]\to\mathbb{G}~\text{such that}~\gamma(0)=x, \gamma(l)=y\}.$$
	If no such curve exists, $d_{cc}(x,y)$ is set to $0$.
	Although $d_{cc}$ is not a metric in general, the 
	H\"{o}rmander
	condition over the vector fields $X_{1}, X_{2}, \cdots, X_{N_{1}}$ ensures that it is.
	The space $(\mathbb{G}, d_{cc})$
	is then referred to as the
	{\it Carnot-Carath\'{e}odory}
	space. The definition of the homogeneous quasi-norm on the homogeneous Carnot group $\mathbb{G}$ is another important entity that will be used in the course of this work. See {\sc Ghosh et al.} \cite[Definition 2.3]{ghosh1} for a definition of a homogeneous quasi-norm.\\
	\noindent
	Furthermore, the sub-Laplacian, the horizontal gradient and the horizontal divergence on $\mathbb{G}$ is defined as
	$$\mathcal{L}:=X_1^2+X_2^2+\cdots+X_{N_1}^2,
	\
	\nabla_{\mathbb{G}}:=(X_1,X_2,\cdots,X_{N_1}),
	\
	\text{div}_{\mathbb{G}}v:=\nabla_{\mathbb{G}}\cdot v.$$
	respectively. The sub-Laplacian on the stratified Lie group $\mathbb{G}$ is defined as $\Delta_{\mathbb{G}}u:=div_{\mathbb{G}}(\nabla_{\mathbb{G}}u).$ \\
	Now, let $S$ be a Haar measurable subset of $\mathbb{G}$. Then $\mathcal{H}(T_{\mu}(S))=\mu^{D}\mathcal{M}(S),$ where $\mathcal{H}(S)$ is the Haar measure of $\Omega$. A quasi-ball of radius $r$ and centered at $x\in\mathbb{G}$ is defined by $B(x,r)=\{y\in\mathbb{G}:|y^{-1}*x|<r\}$ with respect to the quasi-norm $|\cdot|$. \\
	We define the Sobolev space,  which is very essential in order to venture into this problem. For $1<p<\infty$, the Sobolev space $W^{1,p}(\Omega)$ on a stratified Lie group is defined as
	\begin{align}\label{sob_spc}
		W^{1,p}(\Omega)&:=\{u\in L^p(\Omega):|\nabla_{\mathbb{G}}u|\in L^p(\Omega)\}.
	\end{align}
	A norm on this space is given by
	$\|u\|_{1,p}:=\|u\|_p+\|u\|.$
	Here, $$\|u\|_p=\left(\int_{\Omega}|u(x)|^pdx\right)^{1/p},~\|u\|:=\left(\int_{\Omega}|\nabla_{\mathbb{G}}u(x)|^pdx\right)^{1/p}.$$
	We 
	define the space $W_0^{1,p}(\Omega)$ as follows:
	$$W_0^{1,p}(\Omega)=\{u\in W^{1,p}(\Omega):u=0~\text{on}~\partial\Omega\},$$ where $u=0~\text{on}~\partial\Omega$ is in the usual trace sense. We note that  $W_0^{1,p}(\Omega)$ is a real separable and uniformly convex Banach space (see  \cite{foll, vodo1,vodo2,xu}) . The following embedding result follows from  \cite[$(2.8)$ ]{7},  \cite{foll}, 
	and 
	\cite[Theorem
	$8.1$]{14}. We also suggest the reader to check \cite[Theorem $2.3$]{2}.
	\begin{lemma}\label{emb}
		Let $\Omega\subset\mathbb{G}$ be a bounded domain with piecewise smooth and simple boundary and assume $1<p<\nu$. Then $W_0^{1,p}(\Omega)$ is continuously embedded in $L^{q}(\Omega)$ for every $q\in[1,\nu^*],$ where $\nu^*=\frac{\nu p}{\nu-p}$. Moreover, the embedding is compact for every $1\leq q<\nu^*$.
	\end{lemma}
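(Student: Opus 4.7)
The plan is to follow the classical Sobolev/Rellich--Kondrachov blueprint adapted to the stratified setting: first establish the endpoint continuous embedding $W^{1,p}_0(\Omega)\hookrightarrow L^{\nu^*}(\Omega)$ via a global Folland--Stein inequality on $\mathbb{G}$, then interpolate with $L^{1}(\Omega)$ using boundedness of $\Omega$, and finally upgrade continuous to compact embedding for subcritical $q$ via a Kolmogorov--Riesz--Fr\'echet criterion phrased in terms of group translations.

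First I would prove the endpoint continuous embedding. Since $W^{1,p}_0(\Omega)$ is by definition the completion of $C_c^{\infty}(\Omega)$ in the $\|\cdot\|_{1,p}$ norm (extending by zero to all of $\mathbb{G}$), it suffices to establish the Folland--Stein inequality
\[
\|u\|_{L^{\nu^*}(\mathbb{G})}\;\le\; C\,\|\nabla_{\mathbb{G}} u\|_{L^{p}(\mathbb{G})}, \qquad u\in C_c^{\infty}(\mathbb{G}),
\]
and then pass to the closure. The standard route is to exploit the fundamental solution $\Gamma$ of $-\mathcal{L}$, which is $(2-\nu)$-homogeneous with respect to the dilations $T_\mu$; then $u=-\Gamma*_{\mathbb{G}}\mathcal{L}u$ and horizontal integration by parts rewrites $u$ as a convolution of $\nabla_{\mathbb{G}} u$ against a kernel that is $(1-\nu)$-homogeneous. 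Hardy--Littlewood--Sobolev on the homogeneous group (the left-invariant fractional integration of order $1$) then yields the displayed inequality with the sharp exponent $\nu^*=\nu p/(\nu-p)$. For general $q\in[1,\nu^*]$, H\"older's inequality on the bounded set $\Omega$ with the pair $(\nu^*/q,\nu^*/(\nu^*-q))$ gives $\|u\|_{q}\le |\Omega|^{(\nu^*-q)/(\nu^* q)}\|u\|_{\nu^*}$.

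For compactness when $1\le q<\nu^*$, I would first reduce to $q=p$ by interpolation: since the embedding into $L^{\nu^*}$ is continuous, it suffices to show compactness for some $\tilde q\in[1,\nu^*)$, and interpolating
\[
\|u\|_{q}\;\le\;\|u\|_{\tilde q}^{\theta}\,\|u\|_{\nu^*}^{1-\theta}
\]
with a suitable $\theta\in(0,1]$ transfers compactness to all subcritical exponents. To get compactness at $\tilde q=p$, let $\{u_n\}$ be bounded in $W_0^{1,p}(\Omega)$, extend by zero, and apply the Kolmogorov--Riesz--Fr\'echet criterion on the homogeneous group: I would verify (a) tightness, which is immediate since the functions are supported in $\Omega$, and (b) equicontinuity of left translations in $L^{p}$,
\[
\sup_{n}\|u_n(g\,\cdot\,)-u_n(\cdot)\|_{L^{p}(\mathbb{G})}\longrightarrow 0 \quad\text{as}\quad |g|\to 0.
\]
The latter follows by representing $u_n(gx)-u_n(x)$ as an integral of $\nabla_{\mathbb{G}} u_n$ along a horizontal sub-Riemannian geodesic joining $x$ to $gx$ (whose length is comparable to $d_{cc}(0,g)\lesssim |g|$ by ball-box), using Minkowski's integral inequality and invariance of the Haar measure under left translation, then invoking boundedness of $\|\nabla_{\mathbb{G}} u_n\|_p$.

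The main obstacle is the lack of a naive difference-quotient characterization of $W^{1,p}_0$ in terms of \emph{Euclidean} translations: one must work consistently with group translations and horizontal curves, so the Poincar\'e-type inequality controlling $\|u_n(g\cdot)-u_n(\cdot)\|_p$ by $|g|\cdot\|\nabla_{\mathbb{G}} u_n\|_p$ has to be proved by a sub-Riemannian path argument (or equivalently by group-convolution mollifiers built from a smooth compactly supported approximate identity in $C_c^{\infty}(\mathbb{G})$ respecting the dilation structure). Once this horizontal Poincar\'e-on-translations estimate is in place, the rest of the argument is the standard translate-plus-mollify scheme of Kolmogorov--Riesz--Fr\'echet, and the assertion of the lemma follows. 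The assumption that $\partial\Omega$ is piecewise smooth and simple is not actually needed for this $W_0^{1,p}$ version (since zero extension preserves the Sobolev class), but it is convenient for the usual trace interpretation of the boundary condition.
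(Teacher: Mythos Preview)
The paper does not actually prove this lemma: it is stated with the preamble ``The following embedding result follows from \cite[(2.8)]{7}, \cite{foll}, and \cite[Theorem~8.1]{14}'' and no argument is given. Your plan is a correct and standard elaboration of precisely what those references contain: the continuous endpoint embedding via Folland's representation $u=K*\nabla_{\mathbb{G}}u$ with a $(1-\nu)$-homogeneous kernel and fractional integration on the homogeneous group is exactly Folland's original proof, and the compactness via Kolmogorov--Riesz--Fr\'echet with group translations is the usual route (cf.\ Haj\l asz--Koskela). Your closing remark that the boundary regularity hypothesis is superfluous for the $W_0^{1,p}$ version is also correct.

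One small technical caveat worth flagging: in the equicontinuity step you write the translation as $u_n(g\,\cdot\,)$ and propose to integrate $\nabla_{\mathbb{G}}u_n$ along a horizontal curve from $x$ to $gx$. Because the $X_i$ are \emph{left}-invariant, the natural curve $t\mapsto x\cdot\sigma(t)$ (with $\sigma$ a fixed horizontal curve from $e$ to $g$) joins $x$ to $xg$, not to $gx$; the curve $\sigma(t)\cdot x$ need not be horizontal. The clean fix is simply to run Kolmogorov--Riesz with \emph{right} translations $u_n(\cdot\,g)$, for which the path argument and the change of variables (stratified groups are unimodular) go through verbatim and give $\|u_n(\cdot\,g)-u_n\|_{p}\le d_{cc}(e,g)\,\|\nabla_{\mathbb{G}}u_n\|_{p}$. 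Alternatively, the mollifier route you mention avoids the issue entirely. This is a cosmetic point and does not affect the validity of your plan.
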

	\noindent The following proposition, due to {\sc Ruzhansky-Suragan} \cite{ruzh-suru-1}, will be used on a regular basis. It is an analogue of the divergence theorem in the Euclidean setup.
	\begin{proposition}\label{div_thm}
		Let $f_n\in C^1(\Omega)\cap C(\bar{\Omega})$, $n=1,2,\cdots, N_1$. Then for each $n=1,2,\cdots, N_1$, we have 
		$$\int_{\Omega}X_nf_nd\nu=\int_{\partial\Omega}f_n\langle X_n,d\nu\rangle.$$
		Consequently,
		$$\int_{\Omega}\sum_{n=1}^{N_1}X_nf_nd\nu=\int_{\partial\Omega}\sum_{n=1}^{N_1}f_n\langle X_n,d\nu\rangle.$$
	\end{proposition}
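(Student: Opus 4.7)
My plan is to reduce the statement to the classical Euclidean divergence theorem by exploiting the specific triangular form of left-invariant vector fields on a stratified Lie group. Writing each $X_n$ in Euclidean coordinates as a first-order differential operator
\[
X_n=\sum_{j=1}^{N}a_{nj}(x)\,\partial_{x_j},
\]
the plan is to first establish that, as a consequence of the stratified (nilpotent) group structure and the dilation-homogeneity in Definition~\ref{SLG_defn}, each coefficient $a_{nj}(x)$ is a polynomial depending only on coordinates of lower stratum weight than $x_j$. Concretely, I would verify that $a_{nn}\equiv 1$ and that for $j>n$ the coefficient $a_{nj}$ depends only on $x_1,\dots,x_{j-1}$, so that $\partial_{x_j}a_{nj}(x)\equiv 0$ for every $j$. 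Summing, this gives $\sum_{j}\partial_{x_j}a_{nj}=0$, i.e.\ the vector field underlying $X_n$ is divergence-free with respect to the Haar (=Lebesgue) measure.

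Using this divergence-free property, I can rewrite the integrand as a classical divergence. For any $f_n\in C^1(\Omega)\cap C(\bar\Omega)$,
\[
X_nf_n=\sum_{j=1}^{N}a_{nj}\,\partial_{x_j}f_n=\sum_{j=1}^{N}\partial_{x_j}(a_{nj}f_n)-f_n\sum_{j=1}^{N}\partial_{x_j}a_{nj}=\operatorname{div}(f_n\,\vec{X}_n),
\]
where $\vec{X}_n=(a_{n1},\dots,a_{nN})$ is the Euclidean vector field associated with $X_n$. Integrating over $\Omega$ and applying the standard Euclidean divergence theorem (legitimate because $\Omega$ is bounded with a piecewise-smooth boundary and $f_n$ is $C^1$ up to $\partial\Omega$) yields
\[
\int_{\Omega}X_nf_n\,dx=\int_{\partial\Omega}f_n\,\vec{X}_n\cdot\vec{\nu}_{\mathrm{eucl}}\,d\sigma,
\]
which is precisely $\int_{\partial\Omega}f_n\,\langle X_n,d\nu\rangle$ under the author's notation, where $d\nu$ is the Haar volume form and $\langle X_n,d\nu\rangle$ denotes the contraction of $d\nu$ with $X_n$ restricted to $\partial\Omega$. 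The second identity in the proposition is then immediate by summing the first over $n=1,\dots,N_1$.

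The main technical obstacle is the first step: justifying the triangular (hence divergence-free) structure of the $X_n$'s. I would do this by invoking the Baker--Campbell--Hausdorff formula on the graded nilpotent Lie algebra together with the normalisation $X_n(0)=\partial_{x_n}|_0$, from which the standard computation produces $X_n=\partial_{x_n}+\sum_{j>n}a_{nj}(x_1,\dots,x_{j-1})\partial_{x_j}$ with $a_{nj}$ polynomial; this is classical for homogeneous Carnot groups and can be cited directly from \cite{PRR}. Once that structural fact is in hand, everything else is a one-line reduction to the Euclidean divergence theorem, and the consequence for the sum follows by linearity.
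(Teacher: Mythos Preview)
Your argument is correct and is in fact the standard route to this result: one uses the explicit triangular form of the left-invariant generators on a homogeneous Carnot group to see that each $X_n$, viewed as a Euclidean vector field, is divergence-free, and then the identity reduces to the classical divergence theorem. The only cosmetic point is that the dependence structure of the coefficients $a_{nj}$ is governed by the stratum of $x_j$ rather than by the bare inequality $j>n$; your weaker statement that $a_{nj}$ does not depend on $x_j$ is nonetheless true and is all you need.

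As for comparison with the paper: there is nothing to compare. The paper does not supply a proof of this proposition; it is simply quoted from Ruzhansky--Suragan \cite{ruzh-suru-1} as a known analogue of the Euclidean divergence theorem. Your write-up therefore supplies more than the paper does, and the mechanism you use (divergence-free structure of the $X_n$ plus the Euclidean Gauss--Green formula) is precisely the one underlying the cited reference.
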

	\noindent
	Throughout the paper we shall assume that $\mathcal{H}(\Omega)<\infty$. We define an energy functional associated to problem  \eqref{probmain} as follows
	\begin{align*}
		\begin{split}
			I(u)&=\int_{\Omega}\frac{|\nabla_{\mathbb{G}} u|^{2}}{2}dx+\int_{\Omega}\chi_{\{u>1\}}(x)dx-\frac{\lambda}{3}\int_{\Omega}(u-1)_+^3fdx.
		\end{split}
	\end{align*}
	The functional $I$ exhibits the mountain pass geometry. Let $$\Lambda:=\{\psi\in C([0,1];W_0^{1,2}(\Omega)):\psi(0)=0, I(\psi(1))<0\}$$ which consists of paths joining $u=0$ and the set of points $\{u\in W_0^{1,2}(\Omega):I(u)<0\}$. We further define $$c:=\underset{\psi\in\Lambda}\inf~\underset{u\in \psi([0,1])}\max I(u).$$ 
	However, this functional is not even differentiable and hence is an ineligible candidate to fit into the realm of the variational setup. We first define a smooth function $g:\mathbb{R}\rightarrow [0,2]$ as follows
	\[g(t)= \begin{cases}\label{smooth_func_1}
		0, & \text{if}~t\leq 0 \\
		\text{a positive function}, & \text{if}~0<t<1\\
		0, & \text{if}~t\geq 1
	\end{cases}\]
	and $\int_0^1g(t)dt=1$.
	We further let $G(t)=\int_0^tg(t)dt$. Clearly, $G$ is smooth and nondecreasing function such that 
	\[G(t)= \begin{cases}\label{smooth_func_2}
		0, & \text{if}~t\leq 0 \\
		\text{a positive function}<1, & \text{if}~0<t<1\\
		1, & \text{if}~t\geq 1.
	\end{cases}\]
	Finally,  inspired by the work of {\sc Jerison-Perera} \cite{J_P_1},
	we approximate $I$ using the following functionals 
	which vary with respect to a parameter, $\alpha > 0$, 
	\begin{align*}
		\begin{split}
			I_{\alpha}(u)&=\int_{\Omega}\frac{|\nabla_{\mathbb{G}} u|^{2}}{2}dx+\int_{\Omega}G\left(\frac{u-1}{\alpha}\right)dx-\frac{\lambda}{3}\int_{\Omega}(u-1)_+^2fdx.
		\end{split}
	\end{align*}
\noindent An essential condition in variational techniques which a functional $J:X\to\mathbb{R}$ requires to satisfy is the Palais-Smale (PS) condition. It states that if $J(w_n)\to c$ and $J'(u_n)\to 0$ in $X^*$, the dual of $X$, then there exists a subsequence of $(w_n)$ which strongly converges to, say $w$, in $X$. We shall prove that both  functionals $I, I_{\alpha}$ defined above satisfy the (PS) condtion.
	\section{Monotonicity Lemma}\label{s3}
	Following the argument in  {\sc Caffarelli et al.} \cite[Theorem $5.1$]{Caffa_jeri_kenig}, we shall  prove an important monotonicity result  stated below.
	We refer to the monograph by {\sc Nagel} \cite[ Section $1.2$]{Nagel1} for the background regarding our proof in a non-Euclidean setup.  Most of our modifications are required by the differences from the Euclidean setting.	
	
	\begin{lemma}\label{conv_res1}
		Let $u>0$ be a Lipschitz continuous function on the unit ball $B_1(0)\subset\mathbb{G},$ satisfying the distributional inequalities
		\begin{align}\label{mono_ineq1}\pm\mathcal{L} u\leq \left(\dfrac{\lambda}{\alpha}\chi_{\{|u-1|<\alpha\}}(x)\mathcal{F}(|\nabla_{\mathbb{G}}u|)+A\right),\end{align}
		for constants $A>0$, $0<\alpha\leq 1.$
		Suppose further that $\mathcal{F}$ is
		a continuous function such that $\mathcal{F}(t)=o(t^2)$ near infinity. Then there exist $C=C(N,A)>0$ and $\int_{{B_1}(0)}u^2dx$, but not on $\alpha$, such that 
		$$\underset{x\in B_{\frac{1}{2}}(0)}{\rm{esssup}}\{|\nabla_{\mathbb{G}} u(x)|\}\leq C.$$
	\end{lemma}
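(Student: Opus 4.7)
\emph{Strategy.} I would argue by contradiction with a blow-up, mirroring the Euclidean template of Caffarelli--Jerison--Kenig \cite{Caffa_jeri_kenig} but replacing isotropic dilations with the anisotropic dilations $T_{\mu}$ of Definition~\ref{HLG} and Cartesian translations with the group law $*$. Suppose the bound fails. Then one extracts sequences $\alpha_k\in(0,1]$ and positive Lipschitz functions $u_k$ on $B_1(0)$ satisfying \eqref{mono_ineq1} with fixed $A$ and a uniform bound $\int_{B_1(0)}u_k^{2}\,dx\le C_0$, together with points $x_k\in\overline{B_{1/2}(0)}$ at which $M_k:=|\nabla_{\mathbb{G}}u_k(x_k)|\to\infty$. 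Set $\rho_k=1/M_k$ and define
\[
v_k(y):=u_k\!\bigl(x_k*T_{\rho_k}(y)\bigr)-u_k(x_k).
\]
Because $X_1,\dots,X_{N_1}$ are homogeneous of degree one under $T_{\rho_k}$, one has $X_iv_k(y)=\rho_k (X_iu_k)(x_k*T_{\rho_k}(y))$, so $|\nabla_{\mathbb{G}}v_k(0)|=1$ and $\mathcal{L}v_k=\rho_k^{2}(\mathcal{L}u_k)\circ(\cdot)$. The inequality \eqref{mono_ineq1} rescales to
\[
\pm\,\mathcal{L}v_k \;\le\; \frac{\lambda}{\alpha_k M_k^{2}}\,\chi_{E_k}\,\mathcal{F}\bigl(M_k|\nabla_{\mathbb{G}}v_k|\bigr)+\frac{A}{M_k^{2}}
\]
on balls of radius $r_k\to\infty$, where $E_k=\{|v_k+u_k(x_k)-1|<\alpha_k\}$.

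\emph{Compactness and passage to the limit.} The Lipschitz character of $u_k$ together with Proposition~\ref{div_thm} yields uniform $W^{1,2}_{\mathrm{loc}}$ control on $(v_k)$; converting the $L^{2}$ hypothesis on $u_k$ via the change of variable with Jacobian $\rho_k^{D}$ yields a quantitative growth bound on $v_k$. Extracting a subsequence, $v_k\to v_\infty$ locally uniformly and weakly in $W^{1,2}_{\mathrm{loc}}(\mathbb{G})$. The hypothesis $\mathcal{F}(t)=o(t^2)$ near infinity gives $\mathcal{F}(M_k t)/M_k^{2}\to 0$ uniformly on bounded $t$-intervals, while the sub-Riemannian coarea formula applied to the Lipschitz $v_k$ controls $\int\phi\,\chi_{E_k}/\alpha_k\,dy\le C(\phi)$ for each $\phi\in C_c^\infty(\mathbb{G})$. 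Combining these two estimates, the right-hand side of the rescaled inequality tends to $0$ in $\mathcal{D}'(\mathbb{G})$, so $\mathcal{L}v_\infty=0$ distributionally on all of $\mathbb{G}$, $v_\infty$ is globally Lipschitz, and $|\nabla_{\mathbb{G}}v_\infty(0)|=1$.

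\emph{Liouville and the main obstacle.} A Liouville-type result for $\mathcal{L}$-harmonic functions of at most linear growth on stratified Lie groups (cf.\ \cite{Nagel1}) forces $v_\infty$ to be horizontal-linear, so $|\nabla_{\mathbb{G}}v_\infty|\equiv 1$; this contradicts the growth restriction transferred from $\int_{B_1(0)}u_k^{2}\le C_0$ and produces the bound $C=C(N,A,\int_{B_1(0)}u^2\,dx)$. The hardest step is ensuring uniformity in $\alpha$: when $\alpha_k\to 0$ the prefactor $1/\alpha_k$ alone diverges, and one must absorb it against $|E_k|$ via the horizontal coarea identity, whose form on $\mathbb{G}$ is subtler than its Euclidean counterpart. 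The Liouville step also requires the intrinsic Carnot--Carathéodory scaling and must be invoked with care because of the mixed-degree structure of the vector fields $X_1,\dots,X_{N_1}$.
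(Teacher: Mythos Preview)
Your blow-up/Liouville scheme is \emph{not} the route the paper takes: the paper rescales by $\alpha$ (setting $v(x)=\tfrac{15}{\alpha}u(\tfrac{\alpha}{15}x)$), proves a Caccioppoli estimate using $\mathcal{F}(t)\le\beta t^2+A(\beta)$, and then runs the Caffarelli--Jerison--Kenig maximal function\,/\,Calder\'on--Zygmund iteration on $\mathfrak{M}(|\nabla_{\mathbb{G}}v_1|^2)$ to push the gradient into $L^{\theta}$ for all $\theta<\infty$ and hence into $L^{\infty}$. No compactness, no Liouville.

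Your outline, as written, has two genuine gaps. First, compactness: you pick $x_k$ merely as a point where $|\nabla_{\mathbb{G}}u_k(x_k)|=M_k$ is large, but each $u_k$ is Lipschitz with an a priori \emph{unbounded} constant, so after rescaling there is no reason for $|\nabla_{\mathbb{G}}v_k|$ to be bounded on any fixed ball around $0$; the sentence ``the Lipschitz character of $u_k$ \dots\ yields uniform $W^{1,2}_{\mathrm{loc}}$ control on $(v_k)$'' is unjustified. A blow-up argument of this type requires a weighted-supremum or doubling-type point selection (e.g.\ maximising $d(x,\partial B_{3/4})|\nabla_{\mathbb{G}}u_k(x)|$) so that the gradient is controlled in a growing neighbourhood of $x_k$; without it you cannot extract a limit, let alone pass the pointwise identity $|\nabla_{\mathbb{G}}v_k(0)|=1$ to $|\nabla_{\mathbb{G}}v_\infty(0)|=1$. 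Second, the contradiction: under your rescaling the $L^2$ hypothesis transforms as $\int_{B_R}|v_k+u_k(x_k)|^2\,dy=\rho_k^{-D}\int_{B(x_k,R\rho_k)}u_k^2\,dz\le M_k^{D}C_0\to\infty$, so no ``growth restriction'' survives in the limit. A horizontally linear entire $\mathcal{L}$-harmonic function with $|\nabla_{\mathbb{G}}v_\infty|\equiv 1$ is perfectly consistent with everything you have established; there is nothing to contradict. The CJK maximal-function argument avoids both issues because it never leaves the fixed ball and extracts the $\alpha$-independence directly from the smallness of $\beta$ in $\mathcal{F}(t)\le\beta t^2+A(\beta)$.
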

	\begin{proof}
		Let $u$ be a Lipschitz continuous function on the unit ball $B_1(0)\subset\mathbb{G}.$
		Denote $$v(x)=\frac{15}{\alpha}u\left(\frac{\alpha}{15} x\right),
		\quad 
		v_1=v+\underset{B_{1/4}}{\max}\{v_-\}.$$
		\noindent Since the proof is quite technical in nature, before giving the proof we sketch the idea. The primary challenge is to prove that $|\nabla v|$ is bounded on, say $B_{1/32}$. In Step $1$ we shall establish the $L^{\infty}$ bound on $v_1$, where $v_1$ is a perturbation of $v$. Next, we shall show in Step $2$ that a uniform bound on $|\nabla v|$ exists and this depends on the bound on $v$ and \eqref{mono_ineq1}. This step is also essential to establish an interior regularity estimate for the semilinear equation independent of the monotonicity theorem. The monotonicity theorem also helps to produce an $L^{\infty}$ bound on $v$. A meticulous choice of $\beta>0$ has to be made so that $\mathcal{F}(t)\leq \beta t^2+A(\beta)$.\\
		{\it Step $1$:}~Since $u$ is a Lipschitz continuous function on the unit ball,  it is also bounded on it  by a constant say, $M_0$. By {\sc Magnani-Rajala} \cite[Theorem $1.1$]{rajala1}, $u$ is also differentiable a.e. on $B_1(0)$. Therefore, $0\leq v_1\leq M_1$.\\
\noindent	{\it Step $2$:}~Let us choose a function $\eta\in C_0^{\infty}(B_{1/4})$  such that $0\leq\eta\leq 1$ in $B_{3/4}$ and $\eta=1$ in $B_{1/2}$. Furthermore, for any $\beta\in(0,1]$ we have a positive finite number $A(\beta)$ such that 
		\begin{align}\label{est_0}
		\mathcal{F}(t)&\leq A(\beta)+\beta t^2.
		\end{align}
		Thus by testing with $\eta^2v_1,$ we have
		\begin{align}\label{estimate1}
			\begin{split}
				\int_{\Omega}\eta^2|\nabla_{\mathbb{G}} v_1|^2=&-\int_{\Omega}(2v_1\eta(\tilde{\nabla}v_1 \eta)+\eta^2v_1\mathcal{L} v_1dx)dx\\
				\leq&\frac{1}{2}\int_{\Omega}\eta^2|\nabla_{\mathbb{G}} v_1|^2dx+2\int_{\Omega}v_1^2|\nabla_{\mathbb{G}} \eta|^2dx
				+AM_1\int_{\Omega}\eta^2\left(A(\beta)+\beta|\nabla_{\mathbb{G}} v_1|^2\right)dx\\
				\leq&\frac{1}{2}\int_{\Omega}\eta^2|\nabla_{\mathbb{G}} v_1|^2dx+pM_1^2\int_{\Omega}|\nabla_{\mathbb{G}} \eta|^2dx
				+M_1\int_{\Omega}\eta^2\left(\beta|\nabla_{\mathbb{G}} v_1|^2+A(\beta)\right)dx.
			\end{split}
		\end{align}
	Here, $\tilde{\nabla}v_1\eta=\sum_{k=1}^{N_1}X_kv_1X_k$. It is thus established that 
		\begin{align}\label{estimate2}
			\frac{1}{2}\int_{B_{1/2}}|\nabla_{\mathbb{G}} v_1|^2dx&\leq M_2.
		\end{align}
		We define the maximal operator by
		\begin{align}\label{max_oper}
			\mathfrak{M}f(x)&=\underset{0<r<1/100}{\sup}\frac{1}{|B_r(x)|}\int_{B_r(x)}f(y)dy.
		\end{align}
		For $\mu>0$, we further denote $$S_{\mu}=\{x\in B_{1/32}:\mathfrak{M}(|\nabla_{\mathbb{G}} v_1|^2)(x)>\mu\}.$$
		\begin{Claim}
			There exists
			a constant $C_1$ such that for any $\epsilon>0$ there exists a finite positive number $\mu_0$ such that for any $\mu\geq\mu_0,$
			\begin{enumerate}
				\item $|S_{\mu}\cap Q_0|\leq |S_{\mu_0}\cap Q_0|<\epsilon|Q_0|,$ where $Q_0$ is a cube with side length $2^{-10-10N}$ and $Q_0\cap B_{1/32}\neq\emptyset$.
				\item If $Q$ is a dyadic subcube of $Q_0$ for which $|S_{C_1\mu}\cap Q|\geq \epsilon|Q|$, then $Q\subset Q^*\subset S_{\mu}$, where $Q$ is an immediate dyadic subcube of $Q^*$.
			\end{enumerate}
		\end{Claim}
		\begin{proof}
			We only sketch the proof of the claim as the ideas are borrowed from \cite{Caffa_jeri_kenig}. Assertion (1) follows from the argument given in \cite{Caffa_jeri_kenig}.\\
			Suppose now that Assertion (2) fails to hold. Then one can find a cube $Q$ such that $|S_{\mu}\cap Q|\geq\epsilon|Q|$ and $y\in Q^*$, however $\mathfrak{M}(|\nabla_{\mathbb{G}} v_1|^2)(y)\leq \mu$. Let $\rho$ be $2^{6N}$ times the length of the side of $Q$ and consider $\mathfrak{M}_{\rho/4}(|\nabla_{\mathbb{G}} v_1|^2)(0)$, with the supremum taken over $(0,\rho/4)$. Since $\mathfrak{M}(|\nabla_{\mathbb{G}} v_1|^2)(y)\leq \mu$, there exists a constant $C_2$ such that for any $x\in Q$,
			\begin{align}\label{est1}
				\mathfrak{M}(|\nabla_{\mathbb{G}} v_1|^2)(x)&\leq\max\{\mathfrak{M}_{\rho/4}(|\nabla_{\mathbb{G}} v_1|^2)(x),C_2\mu\}.
			\end{align}
			Let $\phi$ be such that
			\begin{align}\label{harm_func}
				\begin{split}
					-\mathcal{L}\phi&=0~\text{in}~B_{\rho}(y)\\
					\phi&=v_1~\text{on}~\partial B_{\rho}(y).
				\end{split}
			\end{align}
			Since $\phi$ is a minimizer of the functional $\frac{1}{2}\int_{B_{\rho}(y)}|\nabla_{\mathbb{G}}\phi|^2 dx$, we have 
			\begin{align}\label{est2}
				\int_{B_{\rho}(y)}|\nabla_{\mathbb{G}}\phi|^2 dx&\leq \int_{B_{\rho}(y)}|\nabla_{\mathbb{G}} v_1|^2 dx\leq\mu|B_{\rho}(y)|.
			\end{align}
			Of course, we have the mean value property at our disposal (see  {\sc Adamowicz-Warhurst} \cite[Condition 1]{adam1})  to establish that 
			\begin{align}\label{est3}
				\underset{B_{\rho/2}(y)}{\sup}\{|\nabla_{\mathbb{G}} \phi|^2\}&\leq C_3\mu.
			\end{align}
			On choosing $C_1=15\max\{C_2,C_3\}$ we have
			\begin{align}\label{est4}
				\mathcal{A}:=\{x\in Q:\mathfrak{M}_{\rho/4}(|\nabla_{\mathbb{G}} v_1|^2)(x)>C_1\mu\}=\{x\in Q:\mathfrak{M}(|\nabla_{\mathbb{G}} v_1|^2)(x)>C_1\mu\}=:\mathcal{B}.
			\end{align}
			If $x\in\mathcal{A}$, then it is easy see that $x\in\mathcal{B}$. Thus $\mathcal{A}\subset\mathcal{B}$. Suppose that $x\in\mathcal{B}$. Then $\mathfrak{M}(|\nabla_{\mathbb{G}} v_1|^2)(x)>C_1\mu$. However, by \eqref{est1} and by the choice of $C_1$ we have that $x\in\mathcal{A}$.\\
			\noindent Also observe that $$\{x\in Q:\mathfrak{M}_{\rho/4}(|\nabla_{\mathbb{G}} \phi|^2)(x)>C_1\mu/4\}=\emptyset.$$ For if not, then there exists $x\in Q$ such that $\mathfrak{M}(|\nabla_{\mathbb{G}} \phi|^2)(x)>C_\mu/4$. One can thus produce $r\in (0,\rho/4)$ such that $$\frac{C_1\mu}{4}<\frac{1}{|B_r(x)|}\int_{B_r(x)}|\nabla_{\mathbb{G}} \phi|^2dy\leq\frac{C_1\mu}{15}.$$
			This is a contradiction since this leads to an absurdity $4>15$.
			Therefore,  
			\begin{align}\label{est5}
				\begin{split}
					|\{x\in Q&: \mathfrak{M}_{\rho/4}(|\nabla_{\mathbb{G}} v_1|^2)>C_1\mu\}|\\
					&\leq|\{x\in Q: \mathfrak{M}_{\rho/4}(|\nabla_{\mathbb{G}} (v_1-\phi)|^2)+\mathfrak{M}_{\rho/4}(|\nabla_{\mathbb{G}}\phi|^2)>C_1\mu/2\}|\\
					&\leq|\{x\in Q: \mathfrak{M}_{\rho/4}(|\nabla_{\mathbb{G}} (v_1-\phi)|^2)>C_1\mu/4\}|+|\{\mathfrak{M}_{\rho/4}(|\nabla_{\mathbb{G}}\phi|^2)>C_1\mu/4\}|\\
					&=|\{x\in Q: \mathfrak{M}_{\rho/4}(|\nabla_{\mathbb{G}}(v_1-\phi)|^2)>C_1\mu/4\}|.
				\end{split}
			\end{align}
			Thus there exists a constant $C_4$, which follows by the weak $(1,1)$ inequality for
			 $\mathfrak{M}$, such that
			\begin{align}\label{est6}
				C_4\mu^{-1}\int_{B_{\rho}(y)}|\nabla_{\mathbb{G}}(v_1-\phi)|^2dx&\geq|\{x\in Q:\mathfrak{M}_{\rho/4}(|\nabla_{\mathbb{G}}(v_1-\phi)|^2)>C_1\mu/4\}|.
			\end{align}
			Furthermore, by the maximum principle we have $|v_1-\phi|\leq C$ on the ball $B_{\rho}(y)$. By the weak formulation of  problem \eqref{harm_func}, we have 
			\begin{align}\label{est7} 
				0&=\int_{B_{\rho}(y)}\tilde{\nabla}\phi(v_1-\phi)dx.
			\end{align}
			Furthermore,
			\begin{align}\label{est8}
				\begin{split}
					-\int_{B_{\rho}(y)}\mathcal{L}v_1(v_1-\phi)dx&=-\int_{B_{\rho}(y)}(\mathcal{L}v_1-\mathcal{L}\phi)(v_1-\phi)dx.
				\end{split}
			\end{align}
			Thus we have 
			\begin{align}\label{est8'}
				\begin{split}
					C_5\int_{B_{\rho}(y)}|\nabla_{\mathbb{G}}(v_1-\phi)|^2dx&=\int_{B_{\rho}(y)}\tilde{\nabla}(v_1-\phi)(v_1-\phi)dx\\
					&\leq -\int_{B_{\rho}(y)}(\mathcal{L}v_1-\mathcal{L}\phi)(v_1-\phi)dx
					=-\int_{B_{\rho}(y)}(\mathcal{L}v_1)(v_1-\phi)dx\\
					&\leq\int_{B_{\rho}(y)}C\left(\beta|v_1|^2+A(\beta)\right)dx.
				\end{split}
			\end{align}
			Using  inequality \eqref{est6}, we get 
			\begin{align}\label{est9}
				\begin{split}
					|\{x\in Q:\mathfrak{M}_{\rho/4}(|\nabla_{\mathbb{G}} v_1|^2)>C_1\mu\}|&\leq C_6\left(\beta+\frac{A(\beta)}{\mu}\right)|Q|.
				\end{split}
			\end{align}
			Thus, for a sufficiently small $\delta>0$ and large $\mu>0,$ we have $$C_6\delta<\epsilon/3
			\quad
			C_6A(\beta)/\mu<\epsilon/3.$$ Therefore
			$$\{x\in Q: \mathfrak{M}(|\nabla_{\mathbb{G}} v_1|^2)>C_1\mu\}<\epsilon|Q|,$$
			which indeed is a contradiction to the hypothesis. Therefore, assertion (2) indeed holds.
		\end{proof}
		\noindent One can now follow verbatim   \cite{Caffa_jeri_kenig} to conclude that assertion (2) leads to 
		\begin{align}\label{est10}
			\begin{split}
				|S_{C_1^k\mu}\cap Q_0|&\leq\epsilon^{k+1}|Q_0|.
			\end{split}
		\end{align}
		We further note from \eqref{est10} that for any $1<\theta<\infty,$ a sufficiently small $\epsilon>0$ can be chosen so that $\mathfrak{M}(|\nabla_{\mathbb{G}} v_1|^2)$ is bounded in $L^{\theta}(B_{1/16})$, i.e.
		\begin{align}\label{est11}
			\begin{split}
				\int_{B_{1/16}}|\nabla_{\mathbb{G}} v_1|^{\theta}dx&\leq C_{7},
			\end{split}
		\end{align}
		where $C_7$ is a uniform constant that depends on $\theta, A, \mathcal{F}$. On choosing $\theta=N,$ we have $2\theta>N$. Hence we obtain  from \eqref{est_0}
		\begin{align}\label{est12}
			\begin{split}
				\underset{B_{1/32}}\sup\{|\nabla_{\mathbb{G}} v_1|\}&\leq C_8.
			\end{split}
		\end{align}
		Reverting back to the variables in terms of $u,$ we get 
		\begin{align}\label{est13}
			\begin{split}
				\underset{B_{\alpha/320}(x)}\sup\{|\nabla_{\mathbb{G}} u|\}&\leq C_8~\text{for any}~x\in B_{1/4}~\text{such that}~|\{u(x)<\alpha\}|,
			\end{split}
		\end{align}
		and in order to finally arrive at the conclusion 
		\begin{align}\label{est14}
			\begin{split}
				\underset{B_{r/4}(0)}\sup\{|\nabla_{\mathbb{G}} u|\}&\leq C_9,
			\end{split}
		\end{align}
		we follow the proof of \cite{Caffa_jeri_kenig} again, however with the choice of $$w(x)=A_0r(r^{N-2}|x|^{2-N}-1)+A(|x|^2-r^2)+O(\alpha).$$ Therefore $\underset{B_{r/2}}{\sup}\{|\nabla_{\mathbb{G}} u|\}<\infty$.
	\end{proof}
\begin{remark}\label{mon_req}
The above monotonicity bound of the type \eqref{mono_ineq1} implies uniform Lipschitz continuity of a family of solutions to a class of semilinear equations with free boundary conditions. In fact, a very important component in the passage to the limit
 in the proof of Theorem \ref{main_res2} in Section~\ref{s5}
 will be the uniform Lipschitz continuity result derived
  in the next section.
\end{remark}
	\section{Convergence Lemma}\label{s4}
	Before proving  Theorem \ref{main_res2}, we shall prove the following convergence result, which is also of independent interest. It  helps us to conclude that the obtained solution is nontrivial in the sense of Remark \ref{nontrivial}.	
\begin{lemma}\label{convergence1}
	Let $(\alpha_j)$ be a sequence of positive numbers such that $\alpha_j\rightarrow 0,$ as $j\rightarrow\infty,$ and let $u_j$ be a critical point of $I_{\alpha_j}$. 
	Suppose that $(u_j)$ is bounded in $W_0^{1,2}(\Omega)\cap L^{\infty}(\Omega).$
	Then there exists a Lipschitz continuous function $u$ on $\bar{\Omega}$ such that $u\in W_0^{1,2}(\Omega)\cap C^2(\bar{\Omega}\setminus H(u))$, and for a renamed subsequence  the following holds:
		\begin{enumerate}[label=(\roman*)]
			\item $u_j\rightarrow u$ uniformly over $\bar{\Omega};$
			\item $u_j\rightarrow u$ locally in $C^1(\bar{\Omega}\setminus\{u=1\});$
			\item $u_j\rightarrow u$ strongly in $W_0^{1,2}(\Omega);$ and
			\item $I(u)\leq\lim\inf I_{\alpha_j}(u_j)\leq\lim\sup I_{\alpha_j}(u_j)\leq I(u)+|\{u=1\}|$.
	\end{enumerate}
	In other words, $u$ is a nontrivial function if $\lim\inf I_{\alpha_j}(u_j)<0$ or $\lim\sup I_{\alpha_j}(u_j)>0$.
	Furthermore, $u$ satisfies $-\mathcal{L}u(x)=\lambda(u(x)-1)_+^2f(x)$ classically in $\Omega\setminus H(u)$ and $u$ satisfies the free boundary condition in the generalized sense and  vanishes continuously on $\partial\Omega$. In the case when  $u$ is  nontrivial, the set $\{u>1\}$ is nonempty.
\end{lemma}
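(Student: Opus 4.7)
The plan is to construct $u$ as the uniform limit of a subsequence of $(u_j)$, then verify each clause by combining the monotonicity lemma, hypoelliptic interior regularity on stratified groups, and measure-theoretic control of the penalty term $\tfrac{1}{\alpha_j}g(\tfrac{u_j-1}{\alpha_j})$. First I would observe that since each $u_j$ is a critical point of $I_{\alpha_j}$, it solves
$$-\mathcal{L} u_j = \lambda (u_j - 1)_+^2 f - \tfrac{1}{\alpha_j}\, g\!\left(\tfrac{u_j - 1}{\alpha_j}\right)$$
in the distributional sense. Because $(u_j)$ is bounded in $L^\infty(\Omega)$ and $g$ takes values in $[0,2]$, the right-hand side is dominated pointwise by an expression of exactly the form \eqref{mono_ineq1} with $\mathcal{F}$ a constant (trivially $o(t^2)$ at infinity). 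Applying Lemma~\ref{conv_res1} therefore yields a uniform Lipschitz bound for $(u_j)$ on compact subsets of $\Omega$, which I would extend up to $\partial\Omega$ by a standard barrier argument using the zero boundary trace and the fundamental solution of $\mathcal{L}$.

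With uniform Lipschitz bounds in hand, Arzel\`a--Ascoli extracts a subsequence converging uniformly on $\bar\Omega$ to a Lipschitz function $u\in W_0^{1,2}(\Omega)$, which gives (i). For (ii), I would fix a compact $K\subset\bar\Omega\setminus\{u=1\}$ on which $|u-1|\geq\delta>0$; uniform convergence then forces $|u_j-1|\geq\delta/2>\alpha_j$ on $K$ for large $j$, so the singular penalty vanishes there and $u_j$ solves the smooth equation $-\mathcal{L} u_j=\lambda(u_j-1)_+^2 f$. Hypoelliptic Schauder estimates for $\mathcal{L}$ then produce uniform $C^{2,\alpha}_{\mathrm{loc}}$ bounds that upgrade the convergence to $C^1_{\mathrm{loc}}$ on $K$ and show $u\in C^2(\bar\Omega\setminus H(u))$ satisfies the PDE classically.

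For (iii), I plan to test the weak equation of $u_j$ with $\phi=u_j-u\in W_0^{1,2}(\Omega)$, obtaining
$$\int |\nabla_\mathbb{G} u_j|^2 - \int \nabla_\mathbb{G} u_j\cdot\nabla_\mathbb{G} u = \lambda\int (u_j-1)_+^2 f(u_j-u) - \tfrac{1}{\alpha_j}\int g\!\left(\tfrac{u_j-1}{\alpha_j}\right)(u_j-u).$$
The first term on the right tends to $0$ via $\|u_j-u\|_\infty\to 0$ and the $L^\infty$ bound on $(u_j-1)_+^2 f$. The main obstacle is the second term: the amplitude $\tfrac{1}{\alpha_j}g(\cdot)$ blows up as $\alpha_j\to 0$, so a uniform mass estimate $\tfrac{1}{\alpha_j}\int g(\tfrac{u_j-1}{\alpha_j})\leq C$ independent of $j$ is essential. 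I would obtain it by testing $u_j$'s equation against a smooth cutoff equal to $1$ on a fixed neighborhood of the strip $\{0<u_j-1<\alpha_j\}$ and invoking Lemma~\ref{positive_Rad_meas}, which is precisely designed to package this penalty term as a positive Radon measure of uniformly bounded mass. This dominates the penalty contribution by $C\|u_j-u\|_\infty\to 0$, and combined with the weak convergence $u_j\rightharpoonup u$, it forces $\|\nabla_\mathbb{G}(u_j-u)\|_2\to 0$.

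For (iv), the liminf bound follows from lower semicontinuity of the Dirichlet term together with Fatou's lemma applied to $G(\tfrac{u_j-1}{\alpha_j})\to\chi_{\{u>1\}}$ pointwise on $\{u\neq 1\}$; the cubic $(u-1)_+^3$ term converges outright by (i) and (iii). For the limsup I would use $G\leq 1$ together with the inclusion $\{u_j>1\}\subset\{u>1-\|u_j-u\|_\infty\}$, whose measure decreases to $|\{u\geq 1\}|=|\{u>1\}|+|\{u=1\}|$. Nontriviality is immediate: if $u\equiv 0$ then $I(u)=0$, contradicting either $\liminf I_{\alpha_j}(u_j)<0$ or $\limsup I_{\alpha_j}(u_j)>0$. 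Finally, for the generalized free boundary condition at $x_0\in H(u)$ I would perform a group-dilation blow-up $u_\mu(x)=\mu^{-1}u(T_\mu x)$ centered at $x_0$, extract a blow-up limit via the uniform Lipschitz bound, and read off $|\nabla_\mathbb{G} u^+|^2-|\nabla_\mathbb{G} u^-|^2=2$ from the corresponding identity for $u_j$ on either side of $\{u=1\}$ provided by (ii), in the spirit of Jerison--Perera adapted to the sub-Riemannian setting.
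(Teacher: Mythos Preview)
Your overall scaffold matches the paper: both rely on Lemma~\ref{conv_res1} for the interior Lipschitz bound, a barrier to keep $\{u_j\geq 1\}$ away from $\partial\Omega$, Arzel\`a--Ascoli for (i), and vanishing of the penalty on compacts away from $\{u=1\}$ for (ii). Two steps, however, are handled differently.

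For (iii), the paper tests the $u_j$-equation with $u_j-1$ and uses the pointwise sign fact $t\,g(t/\alpha_j)\geq 0$ to discard the penalty term outright, giving $\limsup\|u_j\|\leq\|u\|$ directly. Your route, testing with $u_j-u$ and invoking a uniform mass bound on $\tfrac{1}{\alpha_j}\int g(\tfrac{u_j-1}{\alpha_j})\,dx$, also works, but your appeal to Lemma~\ref{positive_Rad_meas} is misplaced: that lemma concerns the \emph{limit} $u$ and shows $\mathcal{L}(u-1)_-$ is a nonnegative Radon measure; it says nothing about a uniform-in-$j$ mass bound for the approximants. The bound you need follows instead by testing $u_j$'s equation against a fixed cutoff $\eta\equiv 1$ on $\{\varphi_0\geq 1\}$ (which contains the support of the penalty by the barrier argument) and using the $W^{1,2}\cap L^\infty$ bounds on $(u_j)$.

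For the free boundary condition the paper takes a different path from your blow-up. It performs a domain-variation (Pohozaev-type) computation: multiply the $u_n$-equation by $\sum_k\varphi_k X_k u_n$ with $\vec\varphi\in C_0^1(\Omega,\mathbb{G})$, integrate over strips $\{1-\epsilon^-<u_n<1+\epsilon^+\}$, pass $n\to\infty$ using (ii), and then let $\epsilon^\pm\to 0$ to obtain the integral identity encoding $|\nabla_{\mathbb{G}}u^+|^2-|\nabla_{\mathbb{G}}u^-|^2=2$. Your dilation blow-up is a legitimate alternative in Euclidean free boundary theory, but in the stratified setting it would require a classification of blow-up limits and non-degeneracy input that you have not supplied; the paper's domain-variation argument avoids this entirely and is what ``generalized sense'' refers to here.
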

	\begin{proof}
	Let $0<\alpha_j<1$. Consider the sequence of problems $(P_j)$
	\begin{align}\label{approx_prob_1}
		\begin{split}
			-\mathcal{L} u_j&=-\frac{1}{\alpha_j}g\left(\frac{(u_j-1)_+}{\alpha_j}\right)+\lambda (u-1)_+^2f~\text{in}~\Omega,\\
			u_j&>0~\text{in}~\Omega,\\
			u_j&=0~\text{on}~\partial\Omega.
		\end{split}
	\end{align}
	The nature of the problem allows us to conclude by an iterative technique that the sequence $(u_j)$ is bounded in $L^{\infty}(\Omega)$. Hence, there exists $C_0$ such that $0\leq (u_j-1)_+^2f\leq C_0$. Let $\varphi_0$ be a solution of 
	\begin{align}\label{approx_prob_2}
		\begin{split}
			-\mathcal{L} \varphi_0&=\lambda C_0~\text{in}~\Omega\\
			\varphi_0&=0~\text{on}~\partial\Omega.
		\end{split}
	\end{align}
	Now, since $g\geq 0$, we have that $-\mathcal{L} u_j\leq \lambda C_0=\mathcal{L} \varphi_0$ in $\Omega$. Therefore by the maximum principle, 
	\begin{align}\label{comparison1}
		0\leq u_j(x)\leq \varphi_0(x)~\text{for all} ~x\in\Omega.
	\end{align}
	Since $\{u_j\geq 1\}\subset\{\varphi_0\geq 1\}$, it follows that $\varphi_0$ gives a uniform lower bound, say $d_0$, on the distance from the set $\{u_j\geq 1\}$ to $\partial\Omega$. Thus $(u_j)$ is bounded with respect to the $C^{2,a}$ norm. Therefore it has a convergent subsequence in the $C^2$-norm on  $\dfrac{d_0}{2}$-neighbourhood of the boundary $\partial\Omega$. Obviously, $0\leq g\leq 2\chi_{(-1,1)}$ and hence
	\begin{align}\label{comparison2}
		\begin{split}
			\pm\mathcal{L} u_j&=\pm\frac{1}{\alpha_j}g\left(\frac{(u_j-1)_+}{\alpha_j}\right)\mp\lambda (u_j-1)_+^2f\\
			&\leq\frac{2}{\alpha_j}\chi_{\{|u_j-1|<\alpha_j\}}(x)+\lambda C_0.
		\end{split}
	\end{align}
	Since, $(u_j)$ is bounded in $L^2(\Omega)$, there exists by Lemma \ref{conv_res1}, $A>0$ such that 
	\begin{align}\label{aux1}
		\underset{x\in B_{\frac{r}{2}}(x_0)}{\text{esssup}}\{|\nabla_{\mathbb{G}} u_j(x)|\}&\leq\frac{A}{r},
	\end{align}
	for a suitable $r>0$ for which $B_r(0)\subset\Omega$. However, since $(u_j)$ is a sequence of Lipschitz continuous functions which incidentally are also $C^1$ functions a.e., we have
	\begin{align}\label{aux2}
		\underset{x\in B_{\frac{r}{2}}(x_0)}{\sup}\{|\nabla_{\mathbb{G}} u_j(x)|\}&\leq\frac{A}{r}.
	\end{align}
	Therefore $(u_j)$ is a sequence of uniformly Lipschitz continuous functions on the compact subsets, say $K$, of $\Omega$ such that $d(K,\partial\Omega)\geq\frac{d_0}{2}$. By the Ascoli-Arzela theorem  applied to $(u_j),$ we get  a subsequence, still referred to by the same name, that  converges uniformly to a Lipschitz continuous function $u$ in $\Omega$ which vanishes on the boundary $\partial\Omega$. The convergence is strong in $C^2$ on a $\frac{d_0}{2}$-neighbourhood of $\partial\Omega$. By the  Banach-Alaoglu theorem we can conclude that $u_j\rightharpoonup u$ in $W_0^{1,2}(\Omega)$.\\
	We now prove that $u$ satisfies
	\begin{align}\label{aux_prob_sat}
		-\mathcal{L} u&=\lambda(u-1)_+^2f
	\end{align}
	on the set $\{u\neq 1\}$. Let $\varphi\in C_0^{\infty}(\{u>1\})$. Thus $u\geq 1+2\delta$ on the support of $\varphi$ for some $\delta>0$. By using the convergence of $u_j$ to $u$ uniformly on $\Omega,$ we conclude that $|u_j-u|<\delta$. Thus for any sufficiently large $j$ with $\delta_j<\delta$ we have $u_j\geq 1+\delta_j$ on the support of $\varphi$. Testing \eqref{aux_prob_sat} with $\varphi$ yields
	\begin{align}\label{weak_conv_1}
		\int_{\Omega}\tilde{\nabla}u_j\varphi dx&=\lambda\int_{\Omega}(u_j-1)_+^2f\varphi dx.
	\end{align}
	By passing the limit $j\rightarrow\infty$ to \eqref{aux_prob_sat}, we obtain
	\begin{align}\label{weak_conv_2}
		\int_{\Omega}\tilde{\nabla}u\varphi  dx&=\lambda\int_{\Omega}(u-1)_+^2f\varphi dx.
	\end{align}
	In order to obtain \eqref{weak_conv_2} we have used the weak and uniform convergence of $u_j$ to $u$ in $W_0^{1,2}(\Omega)$ and $\Omega,$ respectively. Therefore $u$ is a weak solution of $-\mathcal{L}u=\lambda f$ in $\{u>1\}$. 
	Similarly, by choosing $\varphi\in C_0^{\infty}(\{u<1\}),$ we can similarly find a $\delta>0$ such that $u\leq 1-2\delta$ due to which $u_j<1-\delta$.\\
	\noindent We now analyze the nature of $u$ on the set $\{u\leq 1\}^{\circ}$. Testing \eqref{aux_prob_sat} with any nonnegative function, passing to the limit $j\rightarrow\infty$ and using the fact that $g\geq 0$, $G\leq 1,$ it can be shown that $u$ satisfies 
	\begin{align}\label{weak_conv_3}
		\mathcal{L} u&\leq\lambda(u-1)_+^2f~\text{in}~\Omega
	\end{align}
	in the sense of distribution. Furthermore, $\mu=\mathcal{L}(u-1)_{-}$ is a positive Radon measure supported on $\Omega\cap\partial\{u<1\}$ (the reader can refer to Lemma \ref{positive_Rad_meas} in Section 5). From \eqref{weak_conv_3}, $\mu>0$ and the usage of the regularity result by {\sc Gilbarg-Trudinger} \cite[Section $9.4$]{Gil_trud} we establish that $u\in W_{\text{loc}}^{2,2}(\{u\leq 1\}^{\circ})$. Hence $\mathcal{M}$ is supported on $\Omega\cap\partial\{u<1\}\cap\partial\{u>1\}$ and $u$ satisfies $\mathcal{L} u=0$ on the set $\{u\leq 1\}^{\circ}$.\\
	\noindent To prove $(ii)$, we
	shall
	show that $u_j\rightarrow u$ locally in $C^1(\Omega\setminus\{u=1\})$. We
	have
	already proved that $u_j\rightarrow u$ with respect to the $C^2$ norm in a neighbourhood of $\partial\Omega$ of $\bar{\Omega}$. Let $M\subset\subset\{u>1\}$. In this set $M$ we have $u\geq 1+2\delta$ for some $\delta>0$. Hence, for sufficiently large $j$, with $\delta_j<\delta$, we have $|u_j-u|<\delta$ in $\Omega$ and hence $u_j\geq 1+\delta_j$ in $M$. From \eqref{approx_prob_1} we have $$\mathcal{L} u_j=\lambda (u-1)^2f~\text{in}~M.$$
	This analysis says something more stronger - {\it since $\mathcal{L}u_j=\lambda (u-1)^2f$ in $M$, we have that $u_j\rightarrow u$ in $W^{2,2}(M)$}. By the embedding $W^{2,2}(M)\hookrightarrow C^1(M)$ for $p>2$, we have $u_j\rightarrow u$ in $C^1(M)$. This proves that $u_j\rightarrow u$ in $C^1(\{u>1\})$. Similarly, we can also show that $u_j\rightarrow u$ in $C^1(\{u<1\})$.\\
	\noindent We shall now prove $(iii)$. Since $u_j\rightharpoonup u$ in $W_0^{1,p}(\Omega)$, we have by the weak lower semicontinuity of the norm $\|\cdot\|,$ 
	$$\|u\|\leq\lim\inf\|u_j\|.$$
	It suffices
	to prove that $\lim\sup\|u_j\|\leq \|u\|$. To this end, we multiply \eqref{approx_prob_1} with $(u_j-1)$ and then integrate by parts. We shall also use that $tg\left(\frac{t}{\delta_j}\right)\geq 0$ for any $t\in\mathbb{R}$. This yields
	\begin{align}\label{weak_conv_4}
		\begin{split}
			\int_{\Omega}|\nabla_{\mathbb{G}} u_j|^2dx&\leq \lambda\int_{\Omega}(u_j-1)_+^2fdx-\int_{\partial\Omega}u_j\langle X_i,dn\rangle dS\\
			&\rightarrow\lambda\int_{\Omega}(u-1)_+^2fdx-\int_{\partial\Omega}u\langle X_i,dn\rangle dS
		\end{split}
	\end{align}
	as $j\rightarrow\infty$. 
	\noindent We choose $\vec{\varphi}\in C_0^1(\Omega,\mathbb{G})$ such that $u\neq 1$ a.e. on the support of $\vec{\varphi}$. Multiplying by $\sum_{k=1}^{N}\varphi_kX_ku_n$ 
	 the weak formulation of \eqref{approx_prob_1} and integrating over the set $\{1-\epsilon^-<u_n<1+\epsilon^+\},$ we get
	\begin{align}\label{weak_conv_10}
		\begin{split}
			\int_{\{1-\epsilon^-<u_n<1+\epsilon^+\}}\left[-\Delta_{\mathbb{G}} u_n+\frac{1}{\alpha_n}g\left(\frac{u_n-1}{\alpha_n}\right)\right]\sum_{k=1}^{N}\varphi_kX_ku_ndx\\
			=\lambda\int_{\{1-\epsilon^-<u_n<1+\epsilon^+\}}(u_n-1)_+^2f\sum_{k=1}^{N}\varphi_kX_ku_n dx.
		\end{split}
	\end{align}
	The term on the left hand side of \eqref{weak_conv_10} can be expressed as follows:
	\begin{align}\label{weak_conv_11}
		\begin{split}
			\nabla_{\mathbb{G}}\cdot\left(\frac{1}{2}|\nabla_{\mathbb{G}} u_n|^2\vec{\varphi}-(\sum_{k=1}^{N}X_ku_n\varphi_k )\nabla_{\mathbb{G}} u_n\right)&+\sum_{k=1}^{N}\sum_{l=1}^{N}X_l\varphi_kX_lu_nX_ku_n\\
			&-\frac{1}{2}|\nabla_{\mathbb{G}} u_n|^2\nabla_{\mathbb{G}}\cdot\vec{\varphi}+\sum_{k=1}^{N}\varphi_kX_kG\left(\frac{u_n-1}{\alpha_n}\right).
		\end{split}
	\end{align}
	Using \eqref{weak_conv_11} and on integrating by parts, we obtain
	\begin{align}\label{weak_conv_12}
		\begin{split}
			\int_{\{u_n=1+\epsilon^+\}\cup\{u_n=1-\epsilon^-\}}\left[\frac{1}{2}|\nabla_{\mathbb{G}}u_n|^2\sum_{k=1}^{N}\varphi_k\langle X_k,dn\rangle-(\sum_{k=1}^{N}X_ku_n\varphi_k )\sum_{l=1}^{N}X_lu_n\langle X_l,dn\rangle\right.\\
			\left.+G\left(\frac{u_n-1}{\alpha_j}\right)\sum_{k=1}^{N}\varphi_k\langle X_k,dn\rangle\right]\\
			=\int_{\{1-\epsilon^-<u_n<1+\epsilon^+\}}\left(\frac{1}{2}|\nabla_{\mathbb{G}} u_n|^2\sum_{k=1}^{N}X_k\varphi_k-\sum_{k=1}^{N}\sum_{l=1}^{N}X_k\varphi_lX_lu_nX_ku_n\right)dx\\
			+\int_{\{1-\epsilon^-<u_n<1+\epsilon^+\}}\left[G\left(\frac{u_n-1}{\alpha_n}\right)\sum_{k=1}^{N}X_k\varphi_k+\lambda (u_n-1)_+^2f\sum_{k=1}^{N}X_k\varphi_k\right]dx.
		\end{split}
	\end{align}
	The integral on the left of equation \eqref{weak_conv_12} converges to 	
	\begin{align}\label{weak_conv_12'}
		\begin{split}
			&\int_{\{u=1+\epsilon^+\}\cup\{u=1-\epsilon^-\}}\left[\frac{1}{2}|\nabla_{\mathbb{G}}u|^2\sum_{k=1}^{N}\varphi_k\langle X_k,dn\rangle-(\sum_{k=1}^{N}X_ku\varphi_k )\sum_{l=1}^{N}X_lu\langle X_l,dn\rangle\right.\\
			&\left.+\int_{\{u=1+\epsilon^+\}}\sum_{k=1}^{N}\varphi_k\langle X_k,dn\rangle\right]\\
			=&\int_{\{u=1+\epsilon^+\}\cup\{u=1-\epsilon^-\}}\left[\left(1-\frac{1}{2}|\nabla_{\mathbb{G}}u|^2\right)\sum_{k=1}^{N}\varphi_k\langle X_k,dn\rangle-\sum_{k\neq l;1\leq k,l\leq N}\varphi_kX_luX_ku\langle X_l,dn\rangle\right]\\
			=&\int_{\{u=1+\epsilon^+\}}\left[\left(1-\frac{1}{2}|\nabla_{\mathbb{G}}u|^2\right)\sum_{k=1}^{N}\varphi_k\langle X_k,dn\rangle-\sum_{k\neq l;1\leq k,l\leq N}\varphi_kX_luX_ku\langle X_l,dn\rangle\right]
			\end{split}
			\end{align}
			\begin{align*}
			\begin{split}
			&-\int_{\{u=1-\epsilon^-\}}\left[\left(\frac{1}{2}|\nabla_{\mathbb{G}}u|^2\right)\sum_{k=1}^{N}\varphi_k\langle X_k,dn\rangle-\sum_{k\neq l;1\leq k,l\leq N}\varphi_kX_luX_ku\langle X_l,dn\rangle\right]\\
		=&\int_{\{1-\epsilon^-<u<1+\epsilon^+\}}\left(\frac{1}{2}|\nabla_{\mathbb{G}} u|^2\sum_{k=1}^{N}X_k\varphi_k-\sum_{k=1}^{N}\sum_{l=1}^{N}X_k\varphi_lX_luX_ku\right)dx\\
		&+\int_{\{1-\epsilon^-<u<1+\epsilon^+\}}\left[\sum_{k=1}^{N}X_k\varphi_k+\lambda(u_n-1)_+^2 f\sum_{k=1}^{N}X_k\varphi_k\right]dx.
	\end{split}
	\end{align*}
	as $n\to\infty$.\\
	Note that the normal vector at the point $P$ on the set $\{u=1+\epsilon^{+}\}\cup\{u=1-\epsilon^{-}\}$ is $n=\pm\frac{\nabla_{\mathbb{G}}u(P)}{|\nabla_{\mathbb{G}}u(P)|}$.
	Thus  equation \eqref{weak_conv_12'} under the limit $\epsilon\rightarrow 0$ becomes 
	\begin{align}\label{weak_conv_14}
		\begin{split}
			0=&\underset{\epsilon\rightarrow 0}{\lim}\int_{\{u=1+\epsilon^+\}}\left[\left(1-\frac{1}{2}|\nabla_{\mathbb{G}}u|^2\right)\sum_{k=1}^{N}\varphi_k\langle X_k,dn\rangle\right]\\
			&-\underset{\epsilon\rightarrow 0}{\lim}\int_{\{u=1-\epsilon^-\}}\left[\left(\frac{1}{2}|\nabla_{\mathbb{G}}u|^2\right)\sum_{k=1}^{N}\varphi_k\langle X_k,dn\rangle\right].
		\end{split}
	\end{align}
	This proves that $u$ satisfies the free boundary condition in the sense of {\it  viscosity}. The solution cannot be trivial since $u\in C^1(\{u>1\})$ and it satisfies the free boundary condition.
	\end{proof}
	\begin{remark}\label{existence1}
	Notice that $I_{\alpha}$ satisfies the Palais-Smale $(PS)$ condition. To prove this, we define $$u_n^+(x):=\max\{u_n(x),0\}, u^++u^-:=(u-1)_++[1-(u-1)_-]=u.$$ 
	Notice that
	\begin{align}\label{PS_cond_pf}
	\begin{split}
I_{\alpha}(u_n)&\geq 2^{-1}\|u_n\|^2-\frac{\lambda}{3}\int_{\Omega}f(u_n^+)^3dx\\
\langle	I_{\alpha}'(u_n),u_n\rangle&\leq \|u_n\|^2-\lambda\int_{\Omega}f(u_n^+)^3dx+\frac{2}{\alpha}|\Omega|.
	\end{split}
\end{align}
Let $c\in\mathbb{R}$ and consider 
\begin{align}\label{PS_cond_pf1}
	\begin{split}
	c+\sigma\|u_n\|+o(1)\geq I_{\alpha}(u_n)-\frac{1}{3}\langle	I_{\alpha}'(u_n),u_n\rangle&\geq 6^{-1}\|u_n\|^2-\frac{2}{\alpha}|\Omega|.
	\end{split}
\end{align}
This implies that $(u_n)$ is bounded in $W_0^{1,2}(\Omega)$. This implies that there exists a subsequence of $(u_n)$ such that $u_n\rightharpoonup u$ in $W_0^{1,2}(\Omega)$, $u_n\to u$ in $L^3(\Omega)$ and $u_n(x)\to u(x)$ a.e. in $\Omega$. Since $\langle	I_{\alpha}'(u_n),v\rangle\to 0$ as $n\to\infty$ we have
\begin{align}\label{PS_cond_pf2}
	\begin{split}
	\underset{n\to\infty}{\lim}\int_{\Omega}\tilde{\nabla}u_nvdx&=	\underset{n\to\infty}{\lim}\left[\int_{\Omega}\frac{1}{\alpha}g\left(\frac{u_n-1}{\alpha}\right)vdx+\lambda\int_{\Omega}(u_n-1)_+^2vfdx\right]~\text{for all}~v\in W_0^{1,2}(\Omega).
	\end{split}
\end{align}
We choose $v=u_n-u$ in \eqref{PS_cond_pf2} to obtain
\begin{align}\label{PS_cond_pf3}
	\begin{split}
		\underset{n\to\infty}{\lim}\int_{\Omega}\tilde{\nabla}u_n(u_n-u)dx&=	\underset{n\to\infty}{\lim}\left[\int_{\Omega}\frac{1}{\alpha}g\left(\frac{u_n-1}{\alpha}\right)(u_n-u)dx+\lambda\int_{\Omega}(u_n-1)_+^2(u_n-u)fdx\right]\\
		&=0.
	\end{split}
\end{align}
This implies that $u_n\to u$ in $W_0^{1,2}(\Omega)$. Hence $I_{\alpha}$ satisfies the (PS) condition.
\end{remark}

\section{Proof of the Main Theorem}\label{s5}

	\noindent Before we prove the existence of a solution to the problem \eqref{probmain}, we develope a few tools which will be used in the proof. We observe that
	$$I_{\alpha}(u)\leq I(u)~\text{in}~W_0^{1,2}(\Omega).$$ 	
	Furthermore, we have
	\begin{align}\label{yang-per1}
	\begin{split}
	I_{\alpha}(u)&\geq\frac{1}{2}\|u\|^2-\frac{\lambda}{3}\int_{\Omega}|u|^3fdx\\
	&\geq \frac{1}{2}\|u\|^2-\frac{C\lambda}{3}\|f\|_{\infty}\|u\|^3 
	\end{split}
	\end{align}
by Lemma \ref{emb}. Therefore, there exists $r_0=r_0(\nu,\lambda,\|f\|_{\infty})>0$ such that 
\begin{align}\label{yang-per2}
\begin{split}
I_{\alpha}(u)&\geq\frac{1}{4}\|u\|^2
\end{split}
\end{align}
for $\|u\|\leq r_0$. Furthermore, for a fixed nonzero $u$ we have $I_{\alpha}(tu)\to-\infty$ as $t\to\infty$ and hence there exists a function $v_0$ such that $I_{\alpha}(v_0)<0=I_{\alpha}(0)$. This indicates that the set  $$\Lambda_{\alpha}:=\{\psi\in C([0,1];W_0^{1,2}(\Omega)):\psi(0)=0, I_{\alpha}(\psi(1))<0\}$$ is nonempty. Hence by the Mountain pass theorem we have 
\begin{align}\label{yang-per3}
\begin{split}
c_{\alpha}&:=\underset{\psi\in\Lambda_{\alpha}}{\inf}~\underset{u\in\psi([0,1])}{\max}I_{\alpha}(u).
\end{split}
\end{align}
By the definition of the set $\Lambda_{\alpha}$ we have $\Lambda\subset\Lambda_{\alpha}$ and
\begin{align}\label{yang-per4}
\begin{split}
c_{\alpha}\leq\underset{u\in\psi([0,1])}{\max}I_{\alpha}(u)\leq\underset{u\in\psi([0,1])}{\max}I(u)
\end{split}
\end{align}
for all $\psi\in\Lambda$. This implies that $c_{\alpha}\leq c$.
\begin{remark}\label{fin_C}
Let $\phi_1$ be the first eigenfunction pertaining to the first eigen value $\lambda_1$ (see Proposition $3.1$ \cite{chen_square}). Notice that
\begin{align}\label{ineq1}
I(t\phi)\to-\infty~\text{as}~t\to\infty.
\end{align}
Thus there exists $t_*>0$ such that $I(t_*\phi_1)<0$. Consider the path which is defined by $\psi(t)=t\phi_1$ for $t\in[0,t_*]$. Then $\psi$ yields a path from $\Lambda$ on which 
\begin{align}\label{ineq2}
	I(t\phi_1)\leq\mathcal{C}:=\underset{t\geq 0}{\sup}\int_{\Omega}\left(\frac{\lambda_1}{2}t^2\phi_1+1\right)dx.
\end{align}
Therefore $c\leq \mathcal{C}$.
\end{remark}
\begin{proof}[Proof of Theorem \ref{main_res2}]
From Remark \ref{fin_C} we conclude that $c_{\alpha}\leq c\leq \mathcal{C}$. Since $I_{\alpha}$ obeys the (PS) condition, it follows that a limit of the (PS) sequence, say $u_{\alpha}$, can be shown to be a critical point of $I_{\alpha}$. Hence we have $I_{\alpha}(u_{\alpha})=c_{\alpha}$.\\
 Now consider a sequence $\alpha_n$ which converges to zero and name $u_{\alpha_n}$ as $u_n$
 and
  $c_{\alpha_n}$ as $c_n$. By Lemma \ref{convergence1} $(i)-(ii)$, we know that a subsequence of $(u_n)$, still denoted by the same name, converges uniformly in $\bar{\Omega}$, locally in $C^1(\bar{\Omega}\setminus\{u=1\}),$ and strongly in $W_0^{1,2}(\Omega)$, to a locally Lipschitz function $u\in W_0^{1,2}(\Omega)\cap C^2(\bar{\Omega}\setminus H(u))$. Moreover, by \eqref{yang-per2} in Remark \ref{existence1} we have $\lim\sup I_{\alpha_n}(u_n)=\lim\sup c_n\geq \frac{r_0}{4}>0$. This indicates that one of the limit conditions $\lim\sup I_{\alpha_n}(u_n)>0$ or $\lim\inf I_{\alpha_n}(u_n)<0$ in Lemma \ref{convergence1} indeed holds. \\ Hence by the paragraph after Lemma \ref{convergence1} $(iv)$, we can 
  conclude that $u$ is nontrivial. Furthermore, by Lemma \ref{convergence1}, $u$ is a classical solution of $\mathcal{L}u=\lambda(u-1)_+^2f$ in $\Omega\setminus\partial\{u>1\}$ and the free boundary condition $|\nabla_{\mathbb{G}}u^+|^2-|\nabla_{\mathbb{G}}u^-|^2=2$ in the
   sense of \eqref{weak_conv_14}, plus it vanishes on the boundary $\partial\Omega$. 
\end{proof}
\begin{remark}\label{lim_case_reason}
We note that the limiting conditions in Lemma \ref{convergence1} are still an open problem, which is sublinear in its nature.
\end{remark}
	\section{Appendix: Radon Measure Lemma}\label{s6}
	\begin{lemma}\label{positive_Rad_meas}
		$u \in W_{\text{loc}}^{1,p}(\Omega)$ and the Radon measure $\mathcal{M}=\mathcal{L}u$ is nonnegative and supported on $\Omega\cap\{u<1\}$.
	\end{lemma}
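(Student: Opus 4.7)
The plan is to proceed in three parts: first, establish the local Sobolev regularity of $u$; second, identify $\mathcal{M}$ as a nonnegative Radon measure by passing to the limit in the approximating problems $(P_j)$ from Lemma \ref{convergence1}; third, localize its support by showing it vanishes on the two open sets $\{u>1\}$ and $\{u<1\}^{\circ}$.

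For the regularity statement $u\in W^{1,p}_{\mathrm{loc}}(\Omega)$, I would invoke the uniform Lipschitz estimate \eqref{aux2} derived in Lemma \ref{convergence1} from the monotonicity bound of Lemma \ref{conv_res1}, together with uniform convergence (Lemma \ref{convergence1}(i)). Since the gradient bound is inherited by the limit, one obtains $|\nabla_{\mathbb{G}}u|\in L^{\infty}_{\mathrm{loc}}(\Omega)$ and hence $u\in W^{1,p}_{\mathrm{loc}}(\Omega)$ for every $p\in[1,\infty]$.

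For the measure statement, I would start from the approximating equation
\[
\mathcal{L}u_n \;=\; \frac{1}{\alpha_n}g\!\left(\frac{(u_n-1)_+}{\alpha_n}\right)-\lambda(u_n-1)_+^2 f,
\]
in which the first summand is pointwise nonnegative because $g\geq 0$. Testing against an arbitrary $\varphi\in C_c^{\infty}(\Omega)$ with $\varphi\geq 0$ and integrating by parts, the integrals $\int\eta\frac{1}{\alpha_n}g(\cdot)\,dx$ for a fixed cutoff $\eta\geq\varphi$ are controlled by $\int|\nabla_{\mathbb{G}}u_n||\nabla_{\mathbb{G}}\eta|\,dx+\lambda\int|(u_n-1)_+^2 f|\eta\,dx$, both of which are uniformly bounded by the uniform $W_0^{1,2}(\Omega)\cap L^{\infty}(\Omega)$ bounds on $(u_n)$. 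Therefore these nonnegative distributions have locally finite mass, and up to a subsequence they converge in the weak-$*$ sense to a nonnegative Radon measure. Combined with the strong convergence $\lambda(u_n-1)_+^2 f\to\lambda(u-1)_+^2 f$ in $L^1_{\mathrm{loc}}(\Omega)$ (which follows from Lemma \ref{convergence1}(i) and the uniform bound on $f$), this identifies $\mathcal{M}=\mathcal{L}u$ distributionally as the sum of an absolutely continuous part and a nonnegative singular part, and the Riesz representation theorem delivers a genuine Radon measure.

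To pin down the support, I would split $\Omega$ into $\{u>1\}$, $\{u<1\}^{\circ}$, and their common boundary. On $\{u>1\}$, Lemma \ref{convergence1}(ii) provides $C^{1}_{\mathrm{loc}}$ convergence and the classical identity $-\mathcal{L}u=\lambda(u-1)_+^2 f$, so the penalty term contributes nothing singular and $\mathcal{M}$ vanishes on this set. On $\{u<1\}^{\circ}$, uniform convergence of $u_n$ to $u$ forces $u_n<1$ on the support of any test function for all sufficiently large $n$, whereby $g((u_n-1)_+/\alpha_n)\equiv 0$ and $(u_n-1)_+\equiv 0$, so again $\mathcal{M}$ vanishes there. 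This confines the support of $\mathcal{M}$ to $\Omega\cap\partial\{u>1\}\subset\Omega\cap\overline{\{u<1\}}$, as required. The main technical obstacle will be the first uniform mass estimate, because the factor $1/\alpha_n$ blows up: I would overcome this by writing $\frac{1}{\alpha_n}g((u_n-1)_+/\alpha_n)\nabla_{\mathbb{G}}u_n=\nabla_{\mathbb{G}}G((u_n-1)_+/\alpha_n)$ and performing an integration-by-parts against $\eta$, which moves the derivative onto the smooth cutoff and trades the blow-up for quantities controlled by the uniform Lipschitz and $W^{1,2}$ bounds.
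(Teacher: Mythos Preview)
Your approach is correct in outline but takes a genuinely different route from the paper's. The paper argues entirely with the limit function $u$, following Alt--Caffarelli: a Caccioppoli-type estimate obtained by testing with $\varphi^2\min\{u-1+\delta,0\}$ yields $u\in W^{1,2}_{\mathrm{loc}}(\Omega)$, and then a second truncated test function $\zeta\max\{\min\{2-(1-u)/\delta,1\},0\}$ is used to verify directly, by letting $\delta\to 0$, that $\mathcal{L}(u-1)_-\geq 0$ in the sense of distributions. No approximating sequence appears at all. Your argument instead imports the machinery of Lemma~\ref{convergence1}: the uniform Lipschitz bound \eqref{aux2} for regularity, and weak-$*$ compactness of the penalty terms $\tfrac{1}{\alpha_n}g(\cdot)$ for the measure. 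This is legitimate and makes the origin of $\mathcal{M}$ (as the limit of the free-boundary penalization) more transparent, whereas the paper's approach has the advantage of being self-contained and keeping the lemma logically independent of Lemma~\ref{convergence1}.

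Two cautions. First, the paper's proof of Lemma~\ref{convergence1} itself invokes Lemma~\ref{positive_Rad_meas}, so your strategy risks circularity; it is not fatal, because the specific ingredients you use (\eqref{aux2}, parts (i)--(ii)) are established in that proof \emph{before} the citation, but you should make this dependency explicit. Second, note that the paper's own proof concludes with $\mathcal{M}=\mathcal{L}(u-1)_-$, not $\mathcal{L}u$ as written in the statement; your computation correctly shows $\mathcal{L}u$ decomposes as a nonnegative singular part plus the \emph{signed} density $-\lambda(u-1)_+^2 f$, which does not literally give ``$\mathcal{M}$ nonnegative'' unless you pass to $(u-1)_-$ or restrict to the singular part. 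Finally, the direct mass bound you give (testing the equation against a cutoff $\eta$) already suffices; the chain-rule identity $\tfrac{1}{\alpha_n}g(\cdot)\nabla_{\mathbb{G}}u_n=\nabla_{\mathbb{G}}G(\cdot)$ in your last paragraph controls a different quantity and is unnecessary here.
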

	\begin{proof}
		We follow the idea of the proof in {\sc Alt-Caffarelli} \cite{alt_caffa}. Choose $\delta>0$ and a test function $\varphi^p\chi_{\{u<1-\delta\}},$ where $\varphi\in C_0^{\infty}(\Omega)$. Then
		\begin{align}\label{app_2}
			\begin{split}
				0&=\int_{\Omega}\tilde{\nabla} u\nabla(\varphi^2\min\{u-1+\delta,0\})dx\\
				&=\int_{\Omega\cap\{u<1-\delta\}}\tilde{\nabla} u(\varphi^2\min\{u-1+\delta,0\})dx\\
				&=\int_{\Omega\cap\{u<1-\delta\}}|\nabla_{\mathbb{G}} u|^2\varphi^2dx+\int_{\Omega\cap\{u<1-\delta\}}\varphi(u-1+\delta)\tilde{\nabla} u\varphi dx,
			\end{split}
		\end{align}
		and so by the Caccioppoli like estimate, we have
		\begin{align}
			\begin{split}
				\int_{\Omega\cap\{u<1-\delta\}}|\nabla_{\mathbb{G}} u|^2\varphi^2dx&=-2\int_{\Omega\cap\{u<1-\delta\}}\varphi(u-1+\delta)\tilde{\nabla}u\varphi dx\\
				&\leq c\int_{\Omega}u^2|\nabla_{\mathbb{G}}\varphi|^2dx.
			\end{split}
		\end{align}
		Since $\int_{\Omega}|u|^2dx<\infty$, by passing the limit $\delta\rightarrow 0,$ we can conclude that $u\in W_{\text{loc}}^{1,2}(\Omega)$. Furthermore, for a nonnegative $\zeta\in C_0^{\infty}(\Omega)$ we have
		\begin{align}\label{app_3}
			\begin{split}
				-\int_{\Omega}\tilde{\nabla}u\zeta dx=&\left(\int_{\Omega\cap\{0<u<1-2\delta\}}+\int_{\Omega\cap\{1-2\delta<u<1-\epsilon\}}+\int_{\Omega\cap\{1-\delta<u<1\}}\right.
				\left.+\int_{\Omega\cap\{u>1\}}\right)\\
				&\left[\tilde{\nabla}u\left(\zeta\max\left\{\min\left\{2-\frac{1-u}{\delta},1\right\},0\right\}\right)\right]dx\\
				\geq& \int_{\Omega\cap\{1-2\delta<u<1-\delta\}}\left[\tilde{\nabla}u\left(2-\frac{1-u}{\delta}\right)\zeta+\frac{\zeta}{\delta}|\nabla_{\mathbb{G}} u|^2\right]dx\geq 0.
			\end{split}
		\end{align}
		On passing to the limit $\delta\rightarrow 0,$ we obtain $\mathcal{L}(u-1)_{-}\geq 0$ in the distribution sense. Therefore there exists a Radon measure, say $\mathcal{M},$
		such that $\mathcal{M}=\mathcal{L}(u-1)_{-}\geq 0$.
	\end{proof} 
	\subsection*{Acknowledgements}
	The first author thanks CSIR, India (25(0292)/18/EMR-II), NBHM, India (02011/47/2021/
	NBHM(R.P.)/R\&D II/2615) and the Science and Engineering Research board (MATRICS scheme) - India (MTR/2018/000525) for the financial support. The second author was supported by the Slovenian Research Agency grants P1-0292, N1-0278, N1-0114 and N1-0083. We acknowledge O.H. Miyagaki for  comments and suggestions during the early preparation of the manuscript. The authors also acknowledge several very constructive comments by the anonymous reviewer which have helped to improve the manuscript.

\end{document}